\tikzstyle{back}=[-, fill={rgb,255: red,193; green,215; blue,255}]
\tikzstyle{front}=[-, fill={rgb,255: red,135; green,157; blue,255}]
\tikzstyle{arrow}=[->]
\tikzstyle{wideArrow}=[width=100 pt, fill=none, ->]
\newcommand {\SB} {{\mathbb B}}
\newcommand {\SC} {{\mathbb C}}
\newcommand {\SR} {{\mathbb R}}
\newcommand {\SRdu} {{\SR^{d+1}}}
\renewcommand {\SS} {{\mathbb S}}
\newcommand {\ST} {{\mathbb T}}
\renewcommand {\phi} {{\varphi}}
\newcommand {\Dt} {{\Delta}}
\newcommand {\e} {{\varepsilon}}
\renewcommand{\O}{\Omega}
\newcommand{\Tu}{{T_\Omega}}
\newcommand{\bO}{{\partial\Omega}}
\newcommand{\tO}{{\widetilde\Omega}}
\newcommand {\la} {{\lambda}}
\newcommand {\bc} {{\mathfrak c}}
\newcommand{\be}{{\bf e}}
\newcommand {\tbn} {{\widetilde{\bn}}}
\newcommand {\bu} {{\bf u}}
\newcommand {\cD} {{\mathcal D}}
\newcommand {\cF} {{\mathcal F}}
\newcommand {\cO} {{\mathcal O}}
\newcommand {\cS} {{\mathcal S}}
\newcommand {\cT} {{\mathcal T}}
\newcommand {\cV} {{\mathcal V}}
\newcommand {\cW} {{\mathcal W}}
\newcommand {\tE} {{\widetilde E}}
\newcommand {\tF} {{\widetilde F}}
\newcommand {\tK} {{\widetilde K}}
\newcommand {\tR} {{\widetilde R}}
\newcommand {\tPhi} {{\widetilde{\raisebox{-0.2ex}[1.25ex][0ex]{$\Phi$}}}}
\newcommand {\hf} {{\widehat f}}
\newcommand {\hg} {{\widehat g}}
\def\Sym{\text{Sym}}
\def\SO{\text{SO}}
\newcommand {\diag} {\mbox{diag }}
\def\supp{\mathop{\rm supp}}
\newcommand {\hdot}[1]{\langle{#1}\rangle}
\def\lan#1#2{{\langle{#1},{#2}\rangle}}
\newcommand {\Scirc} {\raise.2ex\hbox{$\scriptstyle\circ$}}
\newcommand {\sae} {{\,a.e.\,}}
\newcommand {\mand} {{\quad\mbox{and}\quad}}
\renewcommand {\mid} {{\,\colon\,}}
\renewcommand {\mid} {{\,\,\,\colon\,\,\,}}
\newcommand{\Ba}[1]{\begin{array}{#1}}
\newcommand{\Ea}{\end{array}}
\newcommand{\Be}{\begin{equation}}
\newcommand{\Ee}{\end{equation}}
\newcommand{\Bea}{\begin{eqnarray}}
\newcommand{\Eea}{\end{eqnarray}}
\newcommand{\Beas}{\begin{eqnarray*}}
\newcommand{\Eeas}{\end{eqnarray*}}
\newcommand{\Benu}{\begin{enumerate}}
\newcommand{\Eenu}{\end{enumerate}}
\newcommand{\Bi}{\begin{itemize}}
\newcommand{\Ei}{\end{itemize}}
\newcommand{\BR}{\begin{Remark} \em}
\newcommand{\ER}{\end{Remark}}
\newcommand{\BE}{\begin{example} \em}
\newcommand{\EE}{\end{example}}
\newcounter{remark}
\newtheorem{theorem}[equation]{T{\hskip 0pt\footnotesize\bf HEOREM}}
\newtheorem{corollary}[equation]{C{\hskip 0pt\footnotesize\bf OROLLARY}}
\newtheorem{lemma}[equation]{L{\hskip 0pt\footnotesize\bf EMMA}}
\newtheorem{Remark}[equation]{R{\hskip 0pt\footnotesize\bf EMARK}}
\newtheorem{example}[equation]{E{\hskip 0pt\footnotesize\bf XAMPLE}}
\newcommand {\ProofEnd} {
             \begin{flushright} \vskip -0.2in $\Box$ \end{flushright}}
\def\bbone{{\mathbbm 1}}
\DeclareSymbolFont{fouriersymbols}{FMS}{futm}{m}{n}
\DeclareSymbolFont{fourierlargesymbols}{FMX}{futm}{m}{n}
\DeclareMathDelimiter{\VERT}{\mathord}{fouriersymbols}{152}{fourierlargesymbols}{147}
\def \<{\langle}
\def\>{\rangle}
\def \bn{{\mathbf n}}
\newcommand {\fts} {\footnotesize}
\newcommand {\scs} {\scriptsize}
\newcommand{\Dom}{{\mbox{\rm Dom }}}
\begin{document}

\title[Local cone multipliers and Cauchy-Szeg\"o projections] {Local cone multipliers and Cauchy-Szeg\"o projections
in bounded symmetric domains}

\author{Fernando Ballesta Yagüe}
\address{Fernando Ballesta Yagüe 
\\
Departamento de An\'alisis Matem\'atico
\\
Facultad de Ciencias Matem\'aticas
\\
Universidad Complutense de Madrid
\\
28040 Madrid, Spain} \email{ferballe@ucm.es}

\author{Gustavo Garrig\'os}
\address{Gustavo Garrig\'os
\\
Departamento de Matem\'aticas
\\
Universidad de Murcia
\\
30100 Murcia, Spain} \email{gustavo.garrigos@um.es}

\thanks{Both authors partially supported by grant 21955-PI22 from Fundaci\'on S\'eneca (Regi\'on de Murcia, Spain). F.B.Y. also supported by grant PID2020-113048GB-I00 funded by
MCIN/AEI/10.13039/501100011033, and Grupo UCM-970966 (Spain), and benefited from an FPU Grant FPU21/06111 from Ministerio de Universidades (Spain). G.G. also supported by grant PID2022-142202NB-I00 
from Agencia Estatal de Investigaci\'on (Spain).}

\subjclass[2020]{42B15, 47B32, 32A25, 32A35, 32M15, 15B48.
}

\keywords{Cone multiplier,
Cauchy-Szeg\"o projection, Hardy space, symmetric cone.}

\begin{abstract}
We show that the cone multiplier satisfies local $L^p$-$L^q$ bounds only in the trivial range
$1\leq q\leq 2\leq p\leq\infty$. To do so, we suitably adapt to this setting the proof of Fefferman for the ball multiplier.
As a consequence we answer negatively a question by B\'ekoll\'e and Bonami \cite{BB}, regarding the continuity from $L^p\to L^q$ of the Cauchy-Szeg\"o projections
associated with a class of bounded symmetric domains in $\SC^n$ with rank $r\geq2$.
\end{abstract}

\maketitle

\section{Introduction}
\setcounter{equation}{0}\setcounter{footnote}{0}
\setcounter{figure}{0}

Let $d\geq2$. We denote the (forward) light-cone in $\SR^{d+1}$ by
\Be\label{O}
\O=\Big\{\xi=(\xi_1,\xi')\in\SR\times\SR^d\mid \xi_1>|\xi'|\Big\}.
\Ee
The cone multiplier is the operator $f\mapsto Sf$, defined  
by
\Be\label{S}
Sf(x):=\int_\O \hf(\xi)\,e^{2\pi i \hdot{x,\xi}}\,d\xi, \quad x\in\SR^{d+1},
\Ee
say for $f\in \cS(\SR^{d+1})$,
 where $\hf$ denotes the usual Fourier transform
\[
\hf(\xi)=\int_{\SR^{d+1}} f(y)\,e^{-2\pi i \hdot{y,\xi}}\,dy, \quad \xi\in\SR^{d+1}.
\]
We are interested in the problem of \emph{local $(p,q)$-boundedness} of $S$, meaning that for every
bounded set $B\subset\SR^{d+1}$, the operators 
\Be\label{SB}
S^B(f):=\bbone_B\, S\big(f\,\bbone_B)
\Ee
map $L^p(\SRdu)\to L^q(\SRdu)$ (with constants possibly depending on $B$). Clearly, it suffices to restrict $B$ to the family of balls $\SB_r=\{x\in\SR^{d+1}\mid |x|< r\}$ with $r>0$;
moreover, by a homogeneity argument, the problem is actually equivalent to consider one single ball $\SB=\SB_{r_0}$, for some fixed $r_0>0$, that is,
%\[
%S:L^p(\SB_1)\longrightarrow L^q(\SB_1).
%\]
%That is, 
we wish to determine the values of $1\leq p,q\leq \infty$ so that, for some $C>0$ one has
\Be
%\big\|\bone_{\SB_1}S\big(f\bone_{\SB_1}\big)\big\|_{L^q(\SB_1)}
\big\|Sf\big\|_{L^q(\SB)}
\leq \,C\,\|f\|_{L^p(\SB)}, \quad \mbox{$\forall\;f\in C^\infty_c(\SB)$.}
%L^p\cap L^2$ with $\supp f\subset\SB_1$.}
\label{Lpq}
\Ee
%for some constant $C>0$.
This local estimate holds trivially when $1\leq q\leq 2\leq p\leq\infty$, as a consequence of the boundedness of $S$ in $L^2(\SR^{d+1})$ and 
H\"older's inequality in the ball $\SB$. Our goal is to show that \eqref{Lpq} can only hold in this range;
see Figure \ref{fig1}.
%By duality, it suffices to consider the range $1\leq p <2$, and by H\"older's 
The motivation for this question comes from several complex variables, and is discussed at the end of this section. The best range previously known for this problem, due to B\'ekoll\'e and Bonami \cite{BB}, is shown on the left of Figure \ref{fig1}.

\begin{theorem}\label{th1}
%Let $1\leq p<2$. Then
%\[
%\sup\Big\{\|Sf\|_{L^1(\SB_1)}\mid f\in C^\infty_c(\SB_1)\; \mbox{ with } \;\|f\|_p=1\Big\}\,=\,\infty.
%\]
Let $1\leq p,q\leq \infty$. Then \eqref{Lpq} holds if and only  if
\[
1\leq q\leq 2\leq p\leq\infty.
\]
\end{theorem}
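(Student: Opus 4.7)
The plan is to adapt Fefferman's classical proof of the unboundedness of the ball multiplier to the cone setting. Two preliminary reductions: by interpolation with the trivial $L^2$ bound for $S$ together with H\"older's inequality on $\SB$, and by the self-adjointness $(S^\SB)^* = S^\SB$, it suffices to rule out \eqref{Lpq} on the diagonal $p=q$ for every $p>2$. Moreover, by homogeneity of $\O$ one may fix the radius $r_0$ once and for all.

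\textbf{Step A (transference to directional Hilbert transforms).} Suppose, for contradiction, that $S^{\SB}:L^p(\SB)\to L^p(\SB)$ is bounded with norm $A<\infty$ for some $p>2$. At every smooth boundary point $\xi_0=(|w|,w)\in\bO$, $w\neq 0$, the outward unit normal to $\bO$ is $\bn(w)=\tfrac{1}{\sqrt{2}}(-1,w/|w|)$, and the tangent hyperplane at $\xi_0$ passes through the origin. Composing $S$ with modulations $e^{2\pi i\lambda\xi_0\cdot x}$ and dilations centered at $\xi_0$, and restricting to $\SB$, one should realize, in the spirit of de Leeuw's theorem, the half-space multiplier $H_{\bn(w)}$ (the Hilbert transform in direction $\bn(w)$) as a limit of suitably rescaled copies of $S^\SB$, with operator norm $\lesssim A$. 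Iterating for a finite family of directions $w_1,\dots,w_N$ on the unit sphere of $\SR^d$ and applying a standard Rademacher-randomization argument, one obtains the vector-valued square-function inequality
\[
\Bigl\|\bigl(\textstyle\sum_{j=1}^{N}|H_{\bn(w_j)}f_j|^2\bigr)^{1/2}\Bigr\|_p \;\leq\; CA\,\Bigl\|\bigl(\textstyle\sum_{j=1}^{N}|f_j|^2\bigr)^{1/2}\Bigr\|_p,
\]
with $C$ independent of $N$ and of the choice of $w_j$'s.

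\textbf{Step B (Kakeya contradiction).} Since $d\geq 2$, I would restrict $w$ to lie on a single great circle of the unit sphere of $\SR^d$, so that the corresponding tangent normals $\bn(w_j)$ span a two-dimensional affine plane inside $\SR^{d+1}$. On that plane, the classical planar Perron-tree/Besicovitch construction provides, for each $\delta>0$, thin rectangles $R_1,\dots,R_N$ with aspect ratio $\delta$ and long axes orthogonal to $\bn(w_j)$, which are essentially disjoint in measure but whose translates $\widetilde R_j$ along the $\bn(w_j)$-direction have total overlap of order $N$. Tensorizing with a fixed unit cube in the remaining $d-1$ coordinates and plugging $f_j=\bbone_{R_j}$ into the square-function inequality, one obtains a left-hand side of size $\log(1/\delta)(\sum|\widetilde R_j|)^{1/p}$ against a right-hand side of size $(\sum|R_j|)^{1/p}$ (up to a harmless logarithmic factor from the Perron tree); this contradicts the estimate when $\delta$ is sufficiently small, for any $p>2$.

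The main obstacle I anticipate is Step A. Unlike the sphere, the cone has a singular apex where all tangent hyperplanes meet, and only a local estimate on $\SB$ is available, so the modulation-rescaling procedure has to be executed so that several half-space symbols decouple cleanly inside $\SB$ in the limit. The rotational invariance in the $w$-variable (which yields uniformity in the $w_j$'s) and the scale invariance of $\O$ (which lets one absorb the rescaling) are the built-in symmetries that should let the Fefferman scheme survive the passage from the sphere to the cone.
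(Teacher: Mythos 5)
Your overall strategy --- a Meyer-type transference to half-space multipliers with normals $\tbn_j$ along $\partial\O$, followed by a Besicovitch square-function test --- is the same as the paper's, so Steps A and B are sound in spirit (Step A is made rigorous in Lemma~\ref{L_meyer} by translating $\O$ by $-R\bn_j$, noting that these translates exhaust the half-space $\Pi_j$, and applying Fatou; Step B by building rotated boxes $F_j\subset E_j=[0,1]\times R_j$ with one side of length $1/\sqrt2$ parallel to $\tbn_j$). However, the preliminary reduction in your Step 0 is a genuine gap: ruling out the diagonal $p=q$ for $p\neq2$ does \emph{not} rule out the complement of $\{1\leq q\leq2\leq p\leq\infty\}$. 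On $\SB$, H\"older's inequality makes the boundedness region of $S^{\SB}$ closed under decreasing $1/p$ and increasing $1/q$; combining this with Riesz--Thorin against $(1/2,1/2)$ and the duality reflection $(1/p,1/q)\mapsto(1-1/q,1-1/p)$, one checks that boundedness at an off-diagonal point $(1/p_0,1/q_0)$ with $p_0>q_0$ and $q_0>2$ never produces boundedness at any diagonal point other than $(1/2,1/2)$ --- e.g.\ $S^{\SB}:L^4\to L^3$ bounded does not yield $L^r\to L^r$ for any $r\neq2$ by any of these operations. The correct reduction (the one the paper uses) is to the \emph{extremal} pairs $L^p\to L^1$ with $1\leq p<2$, equivalently by duality $L^\infty\to L^q$ with $q>2$: every bad pair with $p<2$ dominates $(p,1)$ by H\"older on the target side, and every bad pair with $q>2$ dominates $(\infty,q)$ by H\"older on the source side.

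Once Step 0 is repaired, the square-function argument can be run either in your $q>2$ form (targeting $L^\infty\to L^q$, with a logarithmic Perron-tree gain; note the left side grows like $(\log(1/\delta))^{1/2-1/q}$, not like $\log(1/\delta)$), or in the paper's $p<2$ form (targeting $L^p\to L^1$, with $|\bigcup F_j|<\varepsilon$ and the translates $\tF_j$ pairwise disjoint, so the factor $\varepsilon^{1/p-1/2}\to0$ gives the contradiction). Your $R_j$/$\tR_j$ conventions are the reverse of the paper's, which is fine, but a further geometric point needs care: it is not enough to place $2$-dimensional Besicovitch rectangles in the affine plane of the normals and tensor with a cube in the remaining coordinates; one must arrange that each test box has a side of definite length exactly parallel to $\tbn_j$, so that $|H_j\bbone_{F_j}|\gtrsim\bbone_{\tF_j}$ reduces to the $1$-dimensional Hilbert transform estimate. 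This is precisely why the paper works with the rotated boxes $F_j$ inside $E_j=[0,1]\times R_j$ rather than with $E_j$ itself.
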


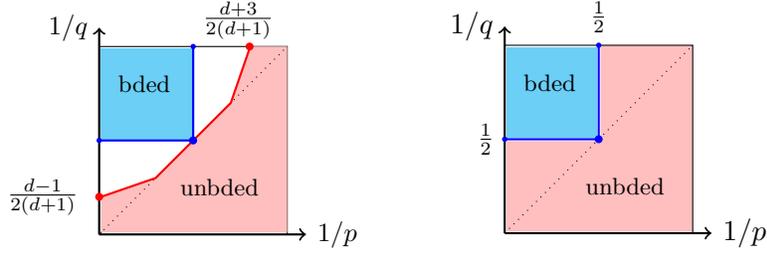
\begin{figure}
       \begin{tikzpicture}[scale=.5]

   \draw [->, thick]       (0,0)-- (5.5,0) node (xaxis) [right] {\fts $1/p$};
    \draw[->,thick] (0,0)--(0,5.5) node (yaxis)[left] { \fts $1/q$};
    \draw(0,0)--(5,5);
    \draw(0,5)--(5,5)--(5,0);
   \fill[cyan!50!white]
(2.5,2.5)--(2.5,4.95)--(0.05,4.95)--(0.05,2.5)--cycle;

\draw[thick,blue] (0,2.5)--(2.5,2.5)--(2.5,5);
\fill [blue] (2.5,2.5) circle (3pt);
\draw (1.2, 4) node {{\scs bded}};

\fill[pink] (0.05,0.05)--(0.05,1)--(1.5,1.5)--(3.5,3.5)--(4,5)--(5,5)--(5,0.05)--cycle;

\draw[dotted] (0,0)--(5,5);

\draw (3.7,5.7) node
{\fts $\tfrac{d+3}{2(d+1)}$};

\draw (-1.5,1) node
{\fts $\tfrac{d-1}{2(d+1)}$};

\draw[thick,red] (2.5,2.5)--(3.5,3.5)--(4,4.95);
\draw[thick,red] (2.5,2.5)--(1.5,1.5)--(0.05,1);

\fill [red] (4,5) circle (3pt); 
\fill [red] (0,1) circle (3pt); 
\fill [blue] (2.5,2.5) circle (3pt);
\fill [blue] (0,2.5) circle (2pt); 
\fill [blue] (2.5,5) circle (2pt);

%\draw (0.75,2) node {{\scs ??}};
%\draw (3, 4.5) node {{\scs ??}};

\draw (3.2, 1.25) node {{\scs unbded}};

\end{tikzpicture} \quad\quad
\begin{tikzpicture}[scale=.5]

   \draw [->, thick] (0,0)-- (5.5,0) node (xaxis) [right] {\small$1/p$};
    \draw[->,thick] (0,0)--(0,5.5) node (yaxis)[left] {\small$1/q$};

\fill[pink] (0.05,0.05)--(0.05,2.45)--(2.55,2.45)--(2.55,5)--(5,5)--(5,0.05)--cycle;
    
    \draw[dotted](0,0)--(5,5);
    \draw(0,5)--(5,5)--(5,0);
		
		\draw (-0.5,2.5) node {\footnotesize$\tfrac12$};
		\draw (2.5,5.75) node {\footnotesize$\tfrac12$};

     \fill[cyan!50!white]
(2.45,2.55)--(2.45,4.95)--(0.05,4.95)--(0.05,2.55)--cycle;
		
		\draw[thick, blue] (0,2.5)--(2.5,2.5)--(2.5,5);
	\fill [blue] (2.5,2.5) circle (3pt); 
	\fill [blue] (0,2.5) circle (2pt); 
	\fill [blue] (2.5,5) circle (2pt);

\draw (1.2, 4) node {{\scs bded}};

\draw (3.2, 1.25) node {{\scs unbded}};

\end{tikzpicture}
\caption{Region of local $(p,q)$-boundedness for $S$: previous results from \cite{BB} on the left figure; the range in Theorem \ref{th1} on the right figure. }
\label{fig1}
\end{figure}

\

We remark that this question only concerns the upper triangle in Figure \ref{fig1}, since the case $p<q$ can be easily disregarded by homogeneity. Indeed, given a fixed $g\in C^\infty_c(\SR^{d+1})$, $g\not\equiv0$, if $R$ is large enough so that
$\supp g\subset R\,\SB$, then  \eqref{Lpq} applied to $f=g(R\cdot)$, together with a change of variables, implies that
\[
\|Sg\|_{L^q(\SB_R)}\leq \, C\,R^{\frac {d+1}q-\frac {d+1}p}\,\|g\|_{L^p(\SR^{d+1})},
\]
which letting $R\to\infty$ gives a contradiction.
If $p=q$, this same argument shows that \eqref{Lpq}
is equivalent to $S$ being globally bounded in $L^p(\SR^{d+1})$, which is false for all $p\not=2$ 
by the theorems of Fefferman and de Leeuw \cite{deL,Fef}. So \eqref{Lpq} becomes relevant only
when $p>q$.

\

A motivation for this question comes from a problem in several complex variables proposed by B\'ekoll\'e and Bonami in \cite{BB}, that we describe next. 
%If $n=d+1$, then it was shown in \cite[\S5]{BB} that the validity of \eqref{Lpq} is equivalent to the $(L^p,L^q)$-continuity of the \emph{Cauchy-Szeg\" o projection} $\SS_\cD$ %$:L^p(\cD)\to L^q(\cD)$, associated with 
Consider the following bounded symmetric domain of $\SC^{n}$, $n\geq3$,
\Be\label{lie}
\cD=\Big\{z\in\SC^n\mid 2|z|^2-1<\big|\sum_{j=1}^nz_j^2\big|^2<1\Big\}.
\Ee
This domain, introduced by \'E. Cartan in  \cite[p.149]{car}, is usually called the \emph{Lie ball} of $\SC^n$, and is conformally equivalent (via a Cayley transform) to the tube domain 
\[
\cT=\SR^n+i\O
\]
over the cone $\O$ in \eqref{O} (with $n=d+1$); see e.g. \cite[p. 263]{vagi}. % or \cite[Theorem X.4.3 and Exercise X.2]{FK}. 

In operator theory, many structural properties on a bounded symmetric domain $D$ of $\SC^n$ (in particular, part of the theory of Hardy spaces $H^p$) depend crucially on the $L^p$-continuity of the associated \emph{Cauchy-Szeg\" o projection} $\SS_D$, see e.g.
\cite{zhu, zhu2}. When $D$ is the unit ball of $\SC^n$ (or a domain of rank 1), it is a classical fact that $\SS_D$ is bounded in $L^p$ for all $1<p<\infty$, see \cite{KV71}. However, for the Lie ball $\cD$ (whose rank is 2), B\'ekoll\'e and Bonami proved that the Cauchy-Szeg\"o projection $\SS_\cD$ maps $L^p\to L^p$ only if $p=2$, see \cite[Theorem 6]{BB}. 

It was asked in \cite{BB} whether $\SS_\cD$ could be bounded from $L^p\to L^q$ when $p\not=q$. Moreover, 
a transference principle given in \cite{BB} shows that this property is actually equivalent to the local $(p,q)$-boundedness of the cone multiplier $S$ (with $n=d+1$), that is to the validity of the inequality \eqref{Lpq} that we discussed before. This equivalence was used in \cite[Theorem 7]{BB} to provide a counterexample for the case $(L^\infty, L^q)$ if $q\geq 2n/(n-2)$, which by duality also applies to the case $(L^p, L^1)$ with $p\leq 2n/(n+2)$; see left of Figure \ref{fig1}. The question, however, was left open in other cases.

Here
we shall answer this question completely, not only for the Lie balls, but also for general symmetric domains of tube type.

\begin{corollary}\label{cor1}
%Let $\cD$ be the Lie ball in $\SC^n$, $n\geq3$, and let $1\leq p,q\leq \infty$. 
Let $D\subset \SC^n$ be an irreducible bounded symmetric domain of tube type with rank $r\geq2$. If $1\leq p,q\leq \infty$, 
then $\SS_D$ extends continuously from $L^p$ into $L^q$
if and only  if
\[
1\leq q\leq 2\leq p\leq\infty.
\]
\end{corollary}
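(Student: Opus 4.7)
The overall strategy is to reduce the $L^p\to L^q$ boundedness of $\SS_D$ to the local cone multiplier problem \eqref{Lpq} and then invoke Theorem \ref{th1}. Recall that every irreducible bounded symmetric domain $D\subset\SC^n$ of tube type with rank $r\geq2$ has an associated Euclidean Jordan algebra $V\cong\SR^n$ with symmetric cone $\O\subset V$, and is biholomorphic via a Cayley transform to the tube $T_\O=V+i\O$. Under the Paley--Wiener description of $H^2(T_\O)$, the Cauchy--Szeg\"o projection $\SS_D$ is unitarily equivalent to Fourier multiplication by $\bbone_\O$ on $L^2(V)$. The sufficient direction $1\leq q\leq 2\leq p\leq\infty$ then follows at once from the $L^2$-boundedness of $\SS_D$ and H\"older's inequality on the compact Shilov boundary of $D$.

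For the necessary direction, the plan is to adapt the transference principle of B\'ekoll\'e--Bonami \cite{BB} from the Lie ball to an arbitrary tube-type symmetric domain: repeating their argument nearly verbatim (it relies only on the Cayley transform together with translations and isotropic dilations in $V$), one obtains that $\SS_D:L^p\to L^q$ is bounded if and only if the multiplier $\bbone_\O$ is locally $(p,q)$-bounded on $V$ in the sense of \eqref{Lpq}. When $D$ is the Lie ball, $\O$ is the forward light cone of $\SR^n$, so the conclusion is exactly Theorem \ref{th1}.

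For the remaining irreducible domains of rank $r\geq2$, I would invoke a standard Jordan-algebraic sub-structure: choosing two orthogonal primitive idempotents $c_1,c_2\in V$, the Peirce subspace $V'=\SR c_1\oplus V_{12}\oplus\SR c_2$ is a Jordan subalgebra isomorphic to a spin factor of dimension at least $3$, whose cone of squares is $\O\cap V'$ and is the forward light cone of $V'$. It then remains to transfer the local multiplier $\bbone_\O$ on $V$ to $\bbone_{\O\cap V'}$ on $V'$. The natural way to do this is a de~Leeuw--type slicing: apply $\bbone_\O$ to tensor products $f(v',v'')=f_0(v')\,G_\lambda(v'')$ with $G_\lambda$ a dilated Gaussian in the transverse direction, and let $\lambda\to\infty$ to recover the local estimate \eqref{Lpq} on $V'$. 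Since $\dim V'\geq 3$, Theorem \ref{th1} forbids this whenever $(p,q)$ lies outside the trivial range.

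The point I expect to be most delicate is this last de~Leeuw--type reduction: classical restriction theorems for Fourier multipliers are stated for $L^p\to L^p$ on all of $\SR^n$, and adapting them to a local $L^p\to L^q$ statement with $p\neq q$ on a ball requires uniform control of the constants as $\lambda\to\infty$, together with a verification that the limiting multiplier on $V'$ is indeed $\bbone_{\O\cap V'}$ and not a smoothed variant. Once that technicality is settled, the negative direction of Corollary \ref{cor1} reduces in all rank $\geq 2$ irreducible cases to Theorem \ref{th1}.
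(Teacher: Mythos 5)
Your overall strategy --- transfer via the Cayley map to reduce $\SS_D:L^p\to L^q$ to the local $(p,q)$-boundedness of $\bbone_\O$ on $V=\SR^n$, and then disprove the latter --- is precisely the structure of the paper's argument (Theorem \ref{th_transf} plus Theorem \ref{th1b}). For the Lie ball the reduction is indeed Theorem \ref{th1} directly. The gap is in your reduction from a general rank-$r\geq 2$ cone to a rank-$2$ (light-cone) piece.

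First, a small but real error in the Jordan-algebraic slice: with $V'=\SR c_1\oplus V_{12}\oplus\SR c_2$, the set $\O\cap V'$ is \emph{not} the light cone of $V'$; when $r>2$ it is in fact empty (think of $V=\Sym(r,\SR)$: a nonzero $2\times2$ block padded with zeros is never positive definite). What one needs is the affine slice $\O\cap[V'+\be']$, where $\be'$ is the identity of the complementary subalgebra; this is exactly \eqref{tO} in the paper, and it equals a translated rank-$2$ cone $\tO+\be'$.

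More seriously, the de Leeuw--type slicing is the step the paper explicitly flags as problematic: at the start of \S\ref{S_2} the authors note that deducing the cone result from the ball via de Leeuw "does not seem so straightforward to carry out in the local setting." The obstruction is quantitative. De Leeuw's argument tensors with a concentrating Gaussian $G_\lambda$ and relies on the exact cancellation of $\|G_\lambda\|_{L^p}$ on both sides; for $p\neq q$ the factors $\|G_\lambda\|_{L^p}$ and $\|G_\lambda\|_{L^q}$ scale differently in $\lambda$, so the limit $\lambda\to\infty$ does not yield a uniform local $(p,q)$ estimate on $V'$. You correctly identify this as "the most delicate point," but it is not a technicality to be settled --- it is the reason this route is avoided.

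The paper sidesteps de Leeuw entirely. It first proves a Meyer-type lemma (Lemma \ref{L_meyer2}) for a general irreducible symmetric cone: by Lemma \ref{L_c1}, translating $\O$ by $-R(\be-c_1)$ and letting $R\to\infty$ fills the half-space $\{\lan\xi{c_1}>0\}$, so the hypothetical local $(p,q)$-bound for $\bbone_\O$ passes to the family of half-space multipliers $H_j$ associated with primitive idempotents $\bc_j$. Because the normals $\bc_j$ can all be chosen in a fixed copy of $\SR^3\subset\cV$, the $H_j$ \emph{factor} across tensor products $\bbone_{F_j}\otimes\phi\otimes\psi$ with $\phi,\psi$ fixed compactly supported bumps (no concentration, no limit in $\lambda$), and the inequality collapses to the $3$-dimensional Fefferman--Besicovitch contradiction of \S\ref{S_21}. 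This is what makes the argument close in the local $(p,q)$ setting where a de Leeuw reduction would not.
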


We remark that, quite recently, a similar result about failure of $L^p\to L^q$ continuity, in the different context of (Riesz) projections in the infinite torus $\ST^\infty$, has been obtained by Konyagin et al., see \cite[Corollary 2.2]{KQSS22}.

\

The structure of the paper is as follows. In \S\ref{S_2} we present a detailed proof of Theorem \ref{th1}, by adapting the original argument of Fefferman. In \S\ref{S_3} we prove Theorem \ref{th1b}, which is an extension of Theorem \ref{th1} to general symmetric cones $\O$ of rank $r\geq 2$; this part makes use of the Jordan algebra structure to reduce matters to the light-cone setting. We devote \S\ref{S_4} to present the concepts related with
Cauchy-Szeg\"o projections, including a sketch of the proof of the transfer principle; see Theorem \ref{th_transf}. Finally, in \S\ref{S_dual}, we give a small application of Corollary \ref{cor1} to the duality of Hardy spaces.

%\begin{comment}We finally remark that the same type of question can be asked about the Bergman projection $\cP_\cD$, namely to determine when
%$\cP_\cD$ extends continuously from $L^p(\cD)$ into $L^q(\cD)$. We briefly discuss this more challenging question in \S? below.
%\end{comment}

\section{Proof of Theorem \ref{th1}}\label{S_2}
\setcounter{equation}{0}\setcounter{footnote}{0}
\setcounter{figure}{0}

Our approach to Theorem \ref{th1} will follow closely the original proof of C.\! Fefferman for the ball multiplier \cite{Fef} (see also \cite[Chapter 10]{stein}), with the Besicovitch set construction given in \cite{Fef} now adapted to fit the geometry of the cone. 
We remark that the standard procedure to disprove the boundedness of $S$ in $L^p(\SR^{d+1})$ (for $p\not=2$), as a direct consequence of Fefferman's result for the $d$-ball and DeLeeuw's theorem (see e.g., \cite[Example 2.5.17]{Gra1}), does not seem so straightforward to carry out in the local setting that we are interested in. 

\subsection{Proof of Theorem \ref{th1}}\label{S_21}

By duality, it suffices to consider the case $1\leq p<2$, and assume for contradiction that
\[
M:=\big\|S\big\|_{L^p(\SB)\to L^1(\SB)}<\infty.
\]
Extending to $\ell^2$-valued operators (or using Khintchine's inequalities, see \cite{MZ39} or \cite[Theorem 5.5.1]{Gra1}), this implies that
\Be
\Big\|\big(\sum_j|Sf_j|^2\big)^\frac12\Big\|_{L^1(\SB)}\,\leq\, c_p\,M\,\Big\|\big(\sum_j|f_j|^2\big)^\frac12\Big\|_{L^p(\SB)},
\label{v1}
\Ee
for some constant $c_p>0$, and every finite collection $\{f_j\}\subset L^p_c(\SB)$.

Our first step is to prove a Meyer type lemma for the operator $S$ (in analogy to \cite[Lemma 1]{Fef}). Namely, given a finite collection of light-ray vectors $\bn_j=(1,\bu_j)\in\bO$, with $|\bu_j|=1$, we let $\tbn_j=(-1,\bu_j)$ and
 define the half-spaces
\begin{equation}\label{Pi}
\Pi_j:=\Big\{\xi\in\SR^{d+1}\mid \lan\xi{\tbn_j}<0\Big\}    
\end{equation}
and the corresponding multiplier operators
\[
H_jf:=\cF^{-1}\big(\bbone_{\Pi_j}\,\hf\big).
\]
\begin{lemma}\label{L_meyer}
With the above notation, it holds
\Be
\Big\|\big(\sum_j|H_jf_j|^2\big)^\frac12\Big\|_{L^1(\SB)}\,\leq\, c_p\,M\,\Big\|\big(\sum_j|f_j|^2\big)^\frac12\Big\|_{L^p(\SB)},
\label{v2}
\Ee
for every $\{f_j\}\subset L^p_c(\SB)$.
\end{lemma}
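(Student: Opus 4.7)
The plan is to realise each half-space multiplier $H_j$ as an $L^2$-limit of suitably modulated copies of $S$, and then transfer the vector-valued inequality \eqref{v1} via Fatou's lemma. This is the direct analogue of Meyer's argument used in \cite{Fef}.

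For each $\lambda>0$ and each $j$, define the modulated operator
$$S_\lambda^{(j)} f(x) := e^{-2\pi i\lambda\hdot{\bn_j,x}}\,S\bigl(e^{2\pi i\lambda\hdot{\bn_j,\cdot}}\,f\bigr)(x).$$
Since multiplication by $e^{2\pi i\lambda\hdot{\bn_j,\cdot}}$ translates the Fourier transform by $\lambda\bn_j$, the operator $S_\lambda^{(j)}$ is the Fourier multiplier with symbol $\bbone_{\Omega-\lambda\bn_j}$. The key geometric observation is that $\bbone_{\Omega-\lambda\bn_j}(\eta)\to\bbone_{\Pi_j}(\eta)$ for almost every $\eta\in\SR^{d+1}$ as $\lambda\to\infty$. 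Indeed, for $\lambda$ large enough we have $\eta_1+\lambda>0$, and expanding $(\eta_1+\lambda)^2-|\eta'+\lambda\bu_j|^2>0$ shows that $\eta+\lambda\bn_j\in\Omega$ is equivalent to
$$\eta_1^2-|\eta'|^2 \;>\; 2\lambda\,\hdot{\eta,\tbn_j};$$
the right-hand side tends to $\mp\infty$ according to the sign of $\hdot{\eta,\tbn_j}$, so the inequality holds (resp.\ fails) for all sufficiently large $\lambda$ whenever $\eta$ lies in $\Pi_j$ (resp.\ in the interior of its complement). By Plancherel and dominated convergence on the frequency side, this yields $S_\lambda^{(j)}f\to H_j f$ in $L^2(\SR^{d+1})$ for every $f\in L^2(\SR^{d+1})$.

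Fix now a finite collection $\{f_j\}\subset C^\infty_c(\SB)$. The modulated data $g_j^\lambda:=e^{2\pi i\lambda\hdot{\bn_j,\cdot}}f_j$ still lies in $C^\infty_c(\SB)$ with $|g_j^\lambda|=|f_j|$, and $|S_\lambda^{(j)}f_j|=|S g_j^\lambda|$ pointwise. Applying \eqref{v1} to the collection $\{g_j^\lambda\}$ therefore yields, uniformly in $\lambda$,
$$\Big\|\bigl(\textstyle\sum_j|S_\lambda^{(j)}f_j|^2\bigr)^{1/2}\Big\|_{L^1(\SB)}\;\leq\;c_p M\,\Big\|\bigl(\textstyle\sum_j|f_j|^2\bigr)^{1/2}\Big\|_{L^p(\SB)}.$$
Since the collection is finite, one may extract a common sequence $\lambda_k\to\infty$ along which $S_{\lambda_k}^{(j)}f_j\to H_jf_j$ almost everywhere for every $j$. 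Fatou's lemma applied to the left-hand side then delivers \eqref{v2} for $\{f_j\}\subset C^\infty_c(\SB)$, and a standard density argument extends it to $\{f_j\}\subset L^p_c(\SB)$. The only non-routine point in the whole argument is the geometric identification of the limiting symbol $\bbone_{\Pi_j}$ starting from the translated cone $\Omega-\lambda\bn_j$; once that is in place, the transfer from \eqref{v1} to \eqref{v2} is a purely mechanical modulation-and-Fatou manipulation.
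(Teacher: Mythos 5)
Your proof is correct and follows essentially the same route as the paper: modulate $S$ to obtain the translated-cone multipliers, identify the pointwise limit of the symbol $\bbone_{\Omega-\lambda\bn_j}$ with $\bbone_{\Pi_j}$, and transfer the vector-valued bound \eqref{v1} via Fatou. The only cosmetic difference is that you pass through $L^2$ convergence and subsequence extraction, whereas the paper applies dominated convergence directly to the inverse Fourier integral to get pointwise convergence of $g_j(x)\to H_jf_j(x)$ at every $x$; both then conclude by Fatou.
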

\begin{proof}
By density, it suffices to consider functions $\{f_j\}\subset C^\infty_c(\SB)$. Given $R\geq1$, let $g_j(x)=e^{-2\pi iR\lan x{\bn_j}}\,S\big(e^{2\pi iR\lan{\cdot}{\bn_j}}f_j\big)(x)$, so that
\[
\hg_j=\bbone_{\O}(\cdot+R\bn_j)\,\widehat{f_j}=\bbone_{\O-R\bn_j}\,\widehat{f_j}
\]
We now observe that 
\Be\label{cupO}
\bigcup_{R\geq1}(\O-R\bn_j)   =  \big\{\xi\in\SRdu\,\colon\, \langle\xi,
\tbn_j\rangle<0\big\}=\Pi_j,
\Ee
see Figure \ref{fig2}. Thus, by dominated convergence we have
\[
\lim_{R\to\infty}|g_j|=
\big|\cF^{-1}\big[\bbone_{\Pi_j}\,\hf_j\big]\big|=\big|H_j(f_j)\big|.
\]
Hence, Fatou's lemma  and \eqref{v1} give
\Beas 
\Big\|\big(\sum_j|H_j(f_j)|^2\big)^\frac12\Big\|_{L^1(\SB)} & \leq & 
\liminf_{R\to \infty}\Big\|\big(\sum_j|g_j|^2\big)^\frac12\Big\|_{L^1(\SB)}\\
& \leq & c_p\,M\,\Big\|\big(\sum_j|f_j|^2\big)^\frac12\Big\|_{L^p(\SB)}.
\Eeas
\vskip-20pt
\end{proof}

\begin{figure}[ht]
    \centering
    \begin{tikzpicture}[scale=0.4]
	\begin{pgfonlayer}{nodelayer}
		\node [style=none, label={below: $-R\bm{n}_j$}] (2) at (-7, -7) {};
		\node [style=none] (4) at (5, 5) {};
		\node [style=none] (5) at (-19, 5) {};
		\node [style=none] (0) at (0, 0) {};
		\node [style=none] (1) at (0, 5) {};
		\node [style=none] (3) at (-5, 5) {};
		\node [style=none] (6) at (-7, 5) {};
		\node [style=none, label={right:$\xi_1$}] (7) at (0, 9) {};
		\node [style=none, label={above:$\xi_2$}] (8) at (7, 0) {};
		\node [style=none, label={above:$\xi_3$}] (9) at (5, 2) {};
		\node [style=none, label={right: $\bm{n}_j$}] (10) at (2, 2) {};
        \node [style=none, label={right: $\tbn_j$}] (15) at (2, -2) {};
		\node [style=none, label={above: $\Omega-R\bm{n}_j$}] (11) at (-7, 7.5) {};
		\node [style=none, label={above: $\Omega$}] (12) at (6, 5) {};
	\end{pgfonlayer}
	\begin{pgfonlayer}{edgelayer}
		\draw [style=back] (4.center)
			 to (2.center)
			 to (5.center)
			 to [bend left=90, looseness=0.30] cycle;
		\draw [style=back] (0.center)
			 to (3.center)
			 to [bend left=90, looseness=0.25] (4.center)
			 to cycle;
		\draw [style=front] (2.center)
			 to (5.center)
			 to [bend right, looseness=0.50] (4.center)
			 to cycle;
		\draw [bend right=60, looseness=0.25] (3.center) to (4.center);
		\draw [style=front] (3.center)
			 to [bend right=60, looseness=0.25] (4.center)
			 to (0.center)
			 to cycle;
		\draw [bend right, looseness=0.50] (5.center) to (4.center);
		\draw [style=arrow] (0.center) to (8.center);
		\draw [style=arrow] (0.center) to (9.center);
		\draw [style=arrow] (0.center) to (7.center);
		\draw [style=arrow, very thick] (0.center) to (10.center);
        \draw [style=arrow, very thick] (0.center) to (15.center);
		\draw [style=arrow] (0.center) to (2.center);
        %\draw [dotted, very thick] (0.center) to (3.center);
	\end{pgfonlayer}
\end{tikzpicture}
    \caption{Translated cones $\Omega-R\bn_{j}$, filling the half-space $H_j$.}
     \label{fig2}
\end{figure}
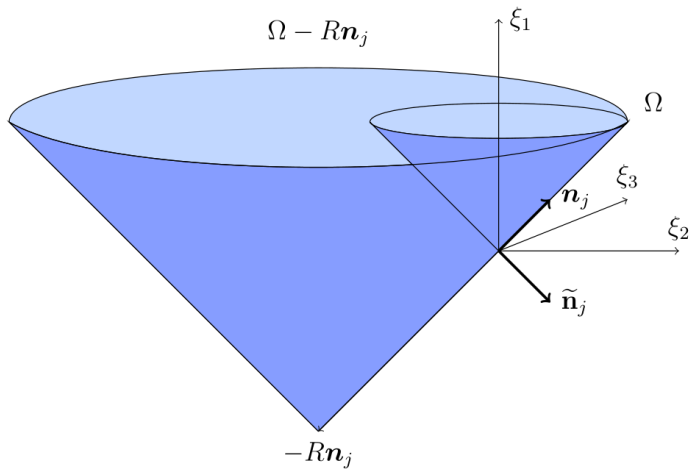

% \begin{figure} 
% \includegraphics[width=10cm]{cones.png}
% \label{fig2}
% \caption{The sets $\O-R\bn_j$, as $R\nearrow\infty$. }
% \end{figure}

\

By taking a specific choice of directions $\{\bn_j\}_{j=1}^{N}$ and functions $\{f_j\}_{j=1}^{N}$ in Lemma \ref{L_meyer}, we shall reach a contradiction.
In the same spirit of Fefferman's construction, we will choose $f_j=\bbone_{F_j}$ for a suitable collection of sets $\{F_j\}_{j=1}^{N}$ with a great overlap, so that the right-hand-side of \eqref{v2} is small. At the same time, we ensure that the left-hand side of \eqref{v2} is greater than a constant, by selecting the sets $F_j$ as boxes made up of $\frac1{\sqrt2}$-segments in the direction $\tbn_j$ and so that the translates $\tF_j=F_j+5\tbn_j$ are pairwise disjoint, which will imply that $\sum_j|H_j(\bbone_{F_j})|^2\gtrsim \sum_j \bbone_{\tF_j}$ has a large support.

\

For clarity, we first give the construction in $\mathbb{R}^3$ (i.e., $d=2$), and later explain how to derive the case $d>2$. Given $\varepsilon>0$, we start with a collection $\{R_j\}_{j=1}^N$ of $1\times \frac{1}{N}$ rectangles in $\SR^2$ with the properties
\Be
\Big|\bigcup_{j=1}^N R_j\Big|<\varepsilon,
\Ee
 and if $\bu_j\in\SR^2$ denotes the long axis direction of $R_j$, then the translated rectangles
 \Be
 \tR_j:=R_j+5\bu_j, \quad j=1,\ldots, N 
\Ee
are pairwise disjoint, so in particular
\Be
\Big|\biguplus_{j=1}^N \tR_j\Big|=1.
\Ee
These rectangles can be obtained with the usual Besicovitch construction, as e.g. in \cite[Theorem X.1]{stein}. We may also assume that $R_j\cup \tR_j$ are all contained a fixed ball, say  
$\{x\in\SR^2\mid |x|\leq 10\}$, for every $j$ and $N$.

We next define in $\SR^3$ the vectors $\tbn_j=(-1,\bu_j)$ and the boxes 
\[
E_j:=[0,1]\times R_j\mand \tE_j:=E_j+5\tbn_j,
\]
which inherit the properties
\[
\big|\bigcup_{j=1}^N E_j\big|<\e \mand \big|\biguplus_{j=1}^N \tE_j\big|=\sum_{j=1}^N|\tE_j|=1.
\]
Indeed, the translated boxes $\tE_j$ are pairwise disjoint because their projections over the plane $\{0\}\times\mathbb{R}^2$ are the rectangles $\Tilde{R}_j$, which are disjoint; see Figure \ref{fig3}. 

Finally, we select a rotated box within $E_j$ made up of parallel $\tfrac1{\sqrt2}$-segments in the direction of $\tbn_j$, as in Figure \ref{fig3}; namely, if $\bc_j\in\SR^2$ is the center of $R_j$ and
\[
E_j=(0,\bc_j)+\Big\{
\lambda_1 \be_1+ \la_2 (0,\bu_j)
+
\lambda_3 (0,\bu_j^{\perp})\mid
\lambda_1\in[0,1],\;
|\lambda_2|\leq\tfrac12,\; 
|\la_3|\leq \tfrac{1}{2N}
\Big\},
\]
then we let
\[
F_j=(0,\bc_j)+\Big\{
\lambda_1 (1,-\bu_j)+ \la_2 (1,\bu_j)
+
\lambda_3 (0,\bu_j^{\perp})\mid
\lambda_1, \la_2\in[0,\tfrac12],\; 
|\la_3|\leq \tfrac{1}{2N}
\Big\}.
\]
These sets $F_j\subset E_j$ satisfy
\begin{itemize}
    \item $F_j$ are boxes, because $(1,-\bu_j)$, $(1,\bu_j)$ and $(0,\bu_j^{\perp})$ are orthogonal vectors
    \item $F_j$ has one side in the direction $\tbn_j=(-1,\bu_j)$, the normal direction of $\Pi_j$
    \item The boxes $F_j$ have a great overlap, since
    \[
    \left|\bigcup_{j=1}^N F_j\right|
    \leq 
    \left|
    \bigcup_{j=1}^N E_j
    \right|
    <\varepsilon
    \]
    \item The translated boxes $\widetilde{F}_j=F_j+5\tbn_j$ are pairwise disjoint (as they are subsets of $\widetilde{E}_j$, which are disjoint), and since $|F_j|=|E_j|/2=\frac{1}{2N}$ we have
    \[
    \big|\biguplus_{j=1}^N \tF_j\big|=\sum_{j=1}^N|\tF_j|=1/2.
    \]
\end{itemize}

\begin{figure}
\centering
\hspace{3cm}\includegraphics[width=12cm]{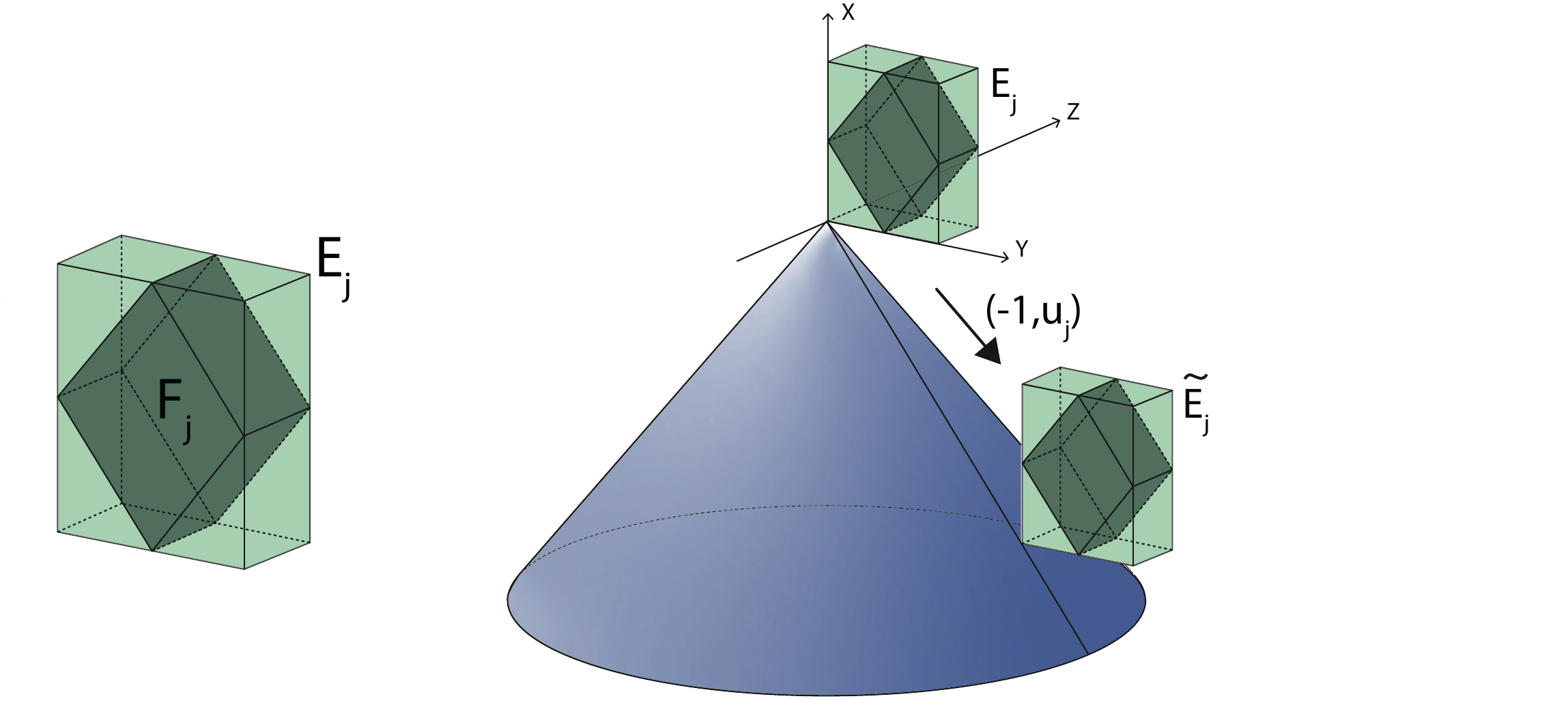}
\caption{Boxes $E_j$ and $F_j$ and their translations $\widetilde{E}_j$ and $\widetilde{F}_j$.}
\label{fig3}
\end{figure}

Using these sets we shall produce a contradiction with Lemma \ref{L_meyer}. We define  $f_j:= \bbone_{F_j}$, and let $\Pi_j$ be as in \eqref{Pi} (with our current choice of $\tbn_j=(-1,\bu_j)$). We assume that the ball $\SB=\SB_{r_0}$ is sufficiently large so that $F_j\cup\tF_j\subset\SB$ for all $j$ and $N$.

Now, the right-hand side of \eqref{v2}, by H\"older's inequality, can be bounded above by
\Bea
\Big\|\big(\sum_j|\bbone_{F_j}|^2\big)^\frac12\Big\|_{L^p(\SB)}
& \leq &  
\Big\|\big(\sum_j|\bbone_{F_j}|^2\big)^\frac12\Big\|_{L^2(\SB)}
\cdot
\left|\bigcup F_j\right|^{\frac{1}{p}-\frac{1}{2}}\nonumber\\
& = & \big(\sum_j|F_j|\big)^\frac12
\cdot
\left|\bigcup F_j\right|^{\frac{1}{p}-\frac{1}{2}}
<
\, \varepsilon^{\frac{1}{p}-\frac{1}{2}}. \label{eps}
\Eea

To estimate the left-hand side of \eqref{v2}, we first observe that,
by the construction of the sets $F_j$ we have
\[
\big|H_j \big(\bbone_{F_j}\big)\big|
\gtrsim
\bbone_{\widetilde{F}_j}.
\]
Indeed, after a translation and a rotation, this can be easily obtained from the fact that the 1-dimensional operator $Hf:=\cF^{-1}(\bbone_{(0,\infty)}\hf)$ satisfies
\[
\Big|H\big(\bbone_{[-\frac12,\frac12]}\big)(x)\Big| \,\geq\,\frac{c}{|x|}, \quad |x|\geq 1/2,
\]
see e.g. \cite[Chapter X, \S2.5.2]{stein}. 
Therefore, using the disjointness of the sets $\{\widetilde{F}_j\}$, we obtain
\Be
\Big\|\big(\sum_j|H_j(f_j)|^2\big)^\frac12\Big\|_{L^1(\SB)}
\gtrsim
\Big\|\big(\sum_j|\bbone_{\widetilde{F}_j} |^2\big)^\frac12\Big\|_{L^1(\SB)}
=
\left|\bigcup \widetilde{F}_j\right|
=
\frac{1}{2}.
\label{ctant}
\Ee
Thus, combining the estimates in \eqref{eps} and \eqref{ctant} with Lemma \ref{L_meyer}, and letting $\varepsilon\searrow0$, we reach the desired contradiction. This completes the proof when $d=2$.

\subsection{Proof of Theorem \ref{th1} when $d>2$}\label{S_d}
We now indicate how to produce a counterexample in $\mathbb{R}^{d+1}$, when $d>2$.  
We split the coordinates in $\SRdu=\SR^3\times \SR^{d-2}$, and apply Lemma \ref{L_meyer} to the functions
\Be\label{fjphi}
f_j
:=
\bbone_{F_j}\otimes \phi
\Ee
where $\phi\in C^\infty_c(\mathbb{R}^{d-2})$ is a fixed (non-zero) function supported in $\SB^{(d-2)}$, and $F_j$ are the sets constructed in the case of $\mathbb{R}^3$. We consider
the half-spaces $\Pi_j$ in \eqref{Pi} related to the light-ray vectors
\Be\label{nj}
\bn_j
:=
(1,\bu_j,0\ldots,0) \mand
\tbn_j
:=
(-1,\bu_j,0\ldots,0), 
\Ee
where $\bu_{j}\in\SR^2$ are the longest side directions of the Besicovitch rectangles $R_j$. In this setting it is clear that we have
\Be\label{Fjphi}
H_j(\bbone_{F_j}\otimes \phi)=H^{(3)}_j(\bbone_{F_j})\otimes \phi,
\Ee
where $H^{(3)}_j$ is the 3-dimensional multiplier operator from the previous setting. Thus, \eqref{v2} implies a corresponding 3-dimensional inequality
\[
\|\phi\|_1\,\Big\|\big(\sum_j|H_j^{(3)}(\bbone_{F_j})|^2\big)^\frac12\Big\|_{L^1(\SB^{(3)})}\,\leq\, c_p\,M\,\|\phi\|_p\,\Big\|\big(\sum_j|\bbone_{F_j}|^2\big)^\frac12\Big\|_{L^p(\SB^{(3)})},
\]
which we know it cannot hold. This completes the proof of Theorem \ref{th1} for all dimensions $d\geq2$.
\ProofEnd

%%% new section  %%%%%

\section{Extension of Theorem \ref{th1} to symmetric cones}
\label{S_3}
\setcounter{equation}{0}\setcounter{footnote}{0}
\setcounter{figure}{0}

In this section we show how to extend the validity of Theorem \ref{th1} from light-cones to general symmetric cones $\O$. This requires the use of appropriate coordinates in the underlying space $V=\SR^n$, regarding it as a Jordan algebra. In what follows we shall use standard notation and terminology from the text \cite{FK}. For readers less familiar with this terminology, one may consider as a guideline the example $V=\Sym(r,\SR)$ with $\O$ the cone of positive definite symmetric matrices. 

Let $\O$ be an irreducible symmetric cone in $V=\SR^n$. As discussed in \cite[Ch III]{FK}, this induces in $V$ a structure of Euclidean Jordan algebra, which we shall assume of rank $r\geq 2$. 
We denote by $\be$ the identity element in $V$, and by $\Dt$ (or $\det$) the associated determinant function. 

An idempotent is a vector $c\in V$ such that $c^2=c$; it is said to be a  \emph{primitive idempotent} if it is non-zero and cannot be written as the sum of two non-zero idempotents. In the case $V=\Sym(r,\SR)$ these are the matrices of the form $c_j=\diag(0,\ldots,1,\ldots,0)$ and their rotations $kc_jk'$, $k\in \SO(r)$.

A complete system of orthogonal primitive idempotents $\{c_1,\ldots, c_r\}$, with $c_1+\ldots+c_r=\be$, is called a \emph{Jordan frame}. Each Jordan frame induces in $V$ an orthogonal decomposition
\[
V=\bigoplus_{1\leq i\leq j\leq r} V_{ij},
\]
which formally allows to regard $V$ as
an algebra of symmetric matrices with entries in the subspaces $V_{ij}$. 
This is called a \emph{Peirce decomposition} of $V$, see \cite[Ch IV]{FK}. With this notation, the cone $\O$ can in particular  be described as 
%the set of all  $\xi\in V$ such that
\Be\label{minor}
\O=\Big\{\xi\in V\mid \Dt_\ell(\xi)>0, \quad \ell=1,2,\ldots, r\Big\},
\Ee
where the \emph{principal minor} $\Dt_\ell(\xi)$ is the determinant of the  projection of $\xi$ onto the Jordan subalgebra $V^{(\ell)}=\oplus_{1\leq i\leq j\leq \ell}V_{ij}$; see e.g. \cite[pp. 114, 121]{FK}. 

\begin{lemma}\label{L_c1}
Let $c_1$ be a primitive idempotent in $V$ and  $\bn:=\be-c_1$. Then
\[
\bigcup_{R\geq1}\big(\O-R\bn\big)=\Big\{\xi\in V\mid \lan\xi{c_1}>0\Big\}.
\]
\end{lemma}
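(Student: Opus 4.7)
The plan is to prove the two inclusions separately, using two standard ingredients from Euclidean Jordan algebra theory (\cite{FK}): the self-duality of $\O$ and the principal-minor characterization \eqref{minor}. As a common preliminary I will verify that $\langle\bn,c_1\rangle=0$: since $c_1$ is a primitive idempotent, $\langle\be,c_1\rangle=\tr(c_1)=1$ and $\langle c_1,c_1\rangle=\tr(c_1^2)=1$, so $\langle\bn,c_1\rangle=\langle\be-c_1,c_1\rangle=0$.

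For the inclusion $\subset$, I would let $\eta:=\xi+R\bn\in\O$ with $R\geq1$. Self-duality of $\O$ together with $c_1\in\overline{\O}\setminus\{0\}$ gives $\langle\eta,c_1\rangle>0$; combining with the preliminary vanishing yields $\langle\xi,c_1\rangle=\langle\eta,c_1\rangle>0$.

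For the reverse inclusion $\supset$, I assume $\xi_1:=\langle\xi,c_1\rangle>0$ and aim to show $\xi+R\bn\in\O$ for every sufficiently large $R$, by checking that each principal minor $\Dt_\ell(\xi+R\bn)$ is positive. After completing $c_1$ to a Jordan frame $(c_1,\dots,c_r)$, the orthogonal projection of $\bn=c_2+\cdots+c_r$ onto $V^{(\ell)}$ equals $c_2+\cdots+c_\ell$ for $\ell\geq 2$ and equals $0$ for $\ell=1$ (consistent with $\langle\bn,c_1\rangle=0$). Hence $\Dt_1(\xi+R\bn)=\xi_1>0$ identically in $R$, while for $\ell\geq 2$, $p_\ell(R):=\Dt_\ell(\xi+R\bn)=\det_{V^{(\ell)}}(\xi^{(\ell)}+R(c_2+\cdots+c_\ell))$ is a polynomial in $R$ whose top coefficient $\det_{V^{(\ell)}}(c_2+\cdots+c_\ell)$ vanishes (the element has rank $\ell-1$ in $V^{(\ell)}$). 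The key calculation is then to identify the coefficient of $R^{\ell-1}$: by the Jordan-algebraic Jacobi formula $D\det(x)[y]=\langle x^{\#},y\rangle$ (see \cite[\S II.2]{FK}), this coefficient equals $\langle(c_2+\cdots+c_\ell)^{\#},\xi^{(\ell)}\rangle$, and since the adjugate in $V^{(\ell)}$ of an element with Jordan spectral values $(0,1,\dots,1)$ is the primitive idempotent $c_1$, this equals $\langle c_1,\xi\rangle=\xi_1>0$. Thus $p_\ell(R)=\xi_1 R^{\ell-1}+O(R^{\ell-2})\to+\infty$, completing the argument.

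The main technical point I expect to require care is applying the Jacobi formula correctly in the Jordan setting and identifying the adjugate of $c_2+\cdots+c_\ell$ in $V^{(\ell)}$. An alternative route, avoiding asymptotic expansions, would be to Peirce-decompose with respect to $c_1$, writing $\xi+R\bn=\xi_1 c_1+u+(z+R\bn)$ in $\SR c_1\oplus V(c_1,\tfrac12)\oplus V(c_1,0)$, and invoke a Schur-complement-type characterization of $\O$; since $\bn$ is the identity of $V(c_1,0)$, the component $z+R\bn$ eventually lies deep inside the subordinate cone of $V(c_1,0)$, from which membership of the whole element in $\O$ would follow.
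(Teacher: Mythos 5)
Your proof is correct, and the second inclusion is established by a genuinely different argument than the one used in the paper. For the easy inclusion ``$\subseteq$'' you do exactly what the paper does: orthogonality $\lan\bn{c_1}=0$ plus self-duality of $\O$. For the harder inclusion ``$\supseteq$'', both you and the paper reduce to showing $\Dt_\ell(\xi+R\bn)>0$ for every $\ell$, but the mechanisms differ. The paper fixes $\ell=r$, applies the Peirce decomposition of $V$ with respect to $c_1$ and then invokes a Schur-complement--type factorization of the determinant (its display \eqref{det2}, taken from \cite[p.139]{FK}), finishing with the positivity of $\det(R\be'+x)$ for large $R$ in the subalgebra $V_0$. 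You instead treat $p_\ell(R)=\Dt_\ell(\xi+R\bn)$ directly as a polynomial in $R$ of degree $\ell$, observe that the leading coefficient $\det_{V^{(\ell)}}(c_2+\cdots+c_\ell)$ vanishes because the element has rank $\ell-1$, and identify the coefficient of $R^{\ell-1}$ via the Jordan Jacobi formula $D\det(x)[y]=\lan{x^\#}{y}$ and the spectral description of the adjugate: for an element with spectral values $(0,1,\dots,1)$ along the frame $(c_1,\dots,c_\ell)$, the adjugate is exactly $c_1$, so the leading coefficient equals $\lan{c_1}{\xi}>0$. Your route trades the Peirce--Schur factorization for the Jacobi/adjugate machinery; it is arguably more self-contained (the Jacobi formula is essentially the definition of the adjugate, whereas \eqref{det2} is a deeper structural identity) and it gives the explicit asymptotic $p_\ell(R)=\xi_1 R^{\ell-1}+O(R^{\ell-2})$, at the cost of needing the spectral description of $(\bn^{(\ell)})^\#$ for a non-invertible argument. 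The alternative you sketch at the end (Peirce decomposition with respect to $c_1$ plus a Schur-complement characterization of $\O$) is precisely the paper's argument, so you have in effect identified both routes. One small remark: in the sign check you use the inner product $\lan\cdot\cdot$ interchangeably with the trace form; in a simple Euclidean Jordan algebra these differ by a positive multiple, so the sign conclusion is unaffected, but it is worth saying explicitly.
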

\begin{proof}
For the inclusion ``$\subseteq$'', the orthogonality relation $\lan\bn{c_1}=0$ gives
\[
\lan{\xi-R\bn}{c_1}
%=\lan\xi{c_1}-R\lan\bn{c_1}
=\lan\xi{c_1}>0, \quad \xi\in \O.
\]
For the converse inclusion ``$\supseteq$'', let $\xi\in V$ be such that $\lan\xi{c_1}>0$. 
We must show that, for some large $R\gg1$, it holds
\[
\xi+R\bn\in\O,
\]
or equivalently, using the principal minor characterization in \eqref{minor}, that
\Be\label{Dtl}
\Dt_\ell(\xi+R\bn)>0, \quad \ell=1,2,\ldots, r.
\Ee
When $\ell=1$, this is trivial since \[
\Dt_1(\xi+R\bn)=\lan{\xi+R\bn}{c_1}=
\lan\xi{c_1}>0.
\]Next, it is enough to prove \eqref{Dtl} for $\ell=r$, since in the other cases $\Dt_\ell(x)$ coincides with the determinant of the  projection of $x$ onto a new Jordan algebra $V^{(\ell)}$, so the same general argument will apply. 

Using the Peirce decomposition (see \cite[Ch IV]{FK}) with respect to $c_1$:
\[
V=V_{11}\oplus V_{\frac12} \oplus V_0,
\]
where $V_{\frac12}=V_{12}\oplus\ldots\oplus V_{1r}$, 
and $V_0$ is the subalgebra $\oplus_{2\leq i\leq j\leq r}V_{ij}$,
we write
\[
\xi+R\bn= \xi_{11} +\xi_{\frac12}+\big(\xi'+R\be'\big),
\]
where $\xi_{11}=\lan\xi{c_1}c_1$, and  $x'$ denotes the projection onto $V_0$ (note that $\bn=\be'$ is the unit in $V_0$).
Then, the properties of the determinant give
\Be\label{det2}
\Dt(\xi+R\bn)\,=\,\lan\xi{c_1}\,\cdot\,\Dt'\Big(\xi'+R\be' -\tfrac1{\lan\xi{c_1}}\big(\xi_{\frac12}^2\big)'\Big),
\Ee
where $\Dt'$ is the determinant in $V_0$; see \cite[p.139]{FK} or \cite[p.1751, line 2]{BGN14}. Now, in a Jordan algebra of rank $s$ we have
\[
\det(R\be+x)=R^s+\cO(R^{s-1})>0, \quad \mbox{if $R\geq R_0(x)$,}
\]
see e.g. \cite[pp. 28-29]{FK}.
Applying this result to the determinant $\Dt'$ in \eqref{det2}, we see that this expression is positive if $R$ is chosen sufficiently large. So, \eqref{Dtl} holds for a sufficiently large $R$, 
completing the proof of the inclusion ``$\supseteq$''.
\end{proof}

We shall now establish a Meyer type lemma for the multiplier operator 
\[
Sf:=\cF^{-1}\big[\bbone_\O\,\hf\big],
\]
where $\O$ is a general irreducible symmetric cone in $V$ (of rank $r\geq2$).
As in \S\ref{S_21}, for $1\leq p<2$, assume that a local inequality holds
\[
M:=\big\|S\big\|_{L^p(\SB)\to L^1(\SB)}<\infty.
\]
We pick a finite collection of vectors $\bn_j=\be- \bc_j$ in $\partial\O$, where each $\bc_j$ is a primitive idempotent in $V$, and define the collection of half-spaces
\Be\label{Pij2}
\Pi_j:=\Big\{\xi\in V\mid \lan\xi{\bc_j}>0\Big\},
\Ee
and the corresponding multiplier operators 
\Be\label{Hj2}
H_jf:=\cF^{-1}\big(\bbone_{\Pi_j}\,\hf\big).
\Ee
The next lemma is then a repetition of Lemma \ref{L_meyer}.

\begin{lemma}\label{L_meyer2}
With the above assumptions, it holds
\Be
\Big\|\big(\sum_j|H_jf_j|^2\big)^\frac12\Big\|_{L^1(\SB)}\,\leq\, c_p\,M\,\Big\|\big(\sum_j|f_j|^2\big)^\frac12\Big\|_{L^p(\SB)},
\label{v22}
\Ee
for every $\{f_j\}\subset L^p_c(\SB)$.
\end{lemma}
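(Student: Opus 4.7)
The proof will follow the exact same scheme as Lemma \ref{L_meyer}, with Lemma \ref{L_c1} replacing the explicit identity \eqref{cupO} used in the light-cone case. The plan is as follows.

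First, as in \S\ref{S_21}, I would extend the assumed bound $\|S\|_{L^p(\SB)\to L^1(\SB)}\leq M$ to an $\ell^2$-valued estimate via Khintchine's inequalities (or Marcinkiewicz--Zygmund), obtaining the vector-valued inequality
\[
\Big\|\big(\sum_j|Sf_j|^2\big)^\frac12\Big\|_{L^1(\SB)}\,\leq\, c_p\,M\,\Big\|\big(\sum_j|f_j|^2\big)^\frac12\Big\|_{L^p(\SB)},
\]
valid for any finite collection $\{f_j\}\subset L^p_c(\SB)$. By density, it suffices to verify \eqref{v22} for $\{f_j\}\subset C^\infty_c(\SB)$.

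Second, following Fefferman's modulation trick, I would define, for each $R\geq 1$,
\[
g_j(x):=e^{-2\pi iR\lan x{\bn_j}}\,S\big(e^{2\pi iR\lan{\cdot}{\bn_j}}f_j\big)(x),
\]
so that its Fourier transform equals $\bbone_{\O-R\bn_j}\,\hf_j$. Since each $\bn_j=\be-\bc_j$ with $\bc_j$ a primitive idempotent, Lemma \ref{L_c1} gives
\[
\bigcup_{R\geq1}(\O-R\bn_j)=\big\{\xi\in V\mid \lan\xi{\bc_j}>0\big\}=\Pi_j,
\]
with the $\Pi_j$ as defined in \eqref{Pij2}. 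The sets $\O-R\bn_j$ are moreover monotone increasing in $R$ (since $\bn_j\in\overline{\O}$, so $-R\bn_j-(-R'\bn_j)\in-\overline{\O}$ when $R\geq R'$, and $\O-\overline{\O}\supset\O$), which is precisely what allows dominated convergence to yield
\[
\lim_{R\to\infty}|g_j(x)|=\big|\cF^{-1}\big[\bbone_{\Pi_j}\hf_j\big](x)\big|=|H_jf_j(x)|,
\]
pointwise a.e., where $H_j$ is the half-space multiplier in \eqref{Hj2}.

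Third, applying the vector-valued inequality above to the collection $\{e^{2\pi i R\lan{\cdot}{\bn_j}}f_j\}_j$ and noting that modulation does not affect absolute values, we have
\[
\Big\|\big(\sum_j|g_j|^2\big)^\frac12\Big\|_{L^1(\SB)}\,\leq\, c_p\,M\,\Big\|\big(\sum_j|f_j|^2\big)^\frac12\Big\|_{L^p(\SB)}
\]
uniformly in $R$. Passing to the liminf as $R\to\infty$ and invoking Fatou's lemma yields \eqref{v22}. The only non-trivial ingredient beyond the light-cone case is the monotonicity/covering property of $\O-R\bn_j$, which is precisely what Lemma \ref{L_c1} supplies; so I expect no additional obstacle.
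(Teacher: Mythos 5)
Your proof is correct and takes the same route as the paper, which simply observes that the argument of Lemma \ref{L_meyer} carries over verbatim once the covering identity \eqref{cupO} is replaced by Lemma \ref{L_c1}. Your extra remark on monotonicity of $\O-R\bn_j$ correctly identifies the implicit ingredient behind the dominated-convergence step, although the inclusion to cite is $\O+\overline{\O}\subset\O$ (so that $\xi+R'\bn_j\in\O$ forces $\xi+R\bn_j=(\xi+R'\bn_j)+(R-R')\bn_j\in\O$ for $R\geq R'$), not the trivially true $\O-\overline{\O}\supset\O$, which does not yield the nestedness.
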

\begin{proof}
The same proof as in Lemma \ref{L_meyer} applies here, after replacing the identity in \eqref{cupO} by the new Lemma \ref{L_c1}.
\end{proof}

With this preparation, we can state the following extension of Theorem \ref{th1}.

\begin{theorem}
\label{th1b}
Let $\O\subset\SR^n$ be an irreducible symmetric cone of rank $r\geq2$. If $1\leq p,q\leq \infty$, then $\bbone_\O$ is a local $(p,q)$-Fourier multiplier
if and only  if
\[
1\leq q\leq 2\leq p\leq\infty.
\]
\end{theorem}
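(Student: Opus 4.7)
The plan is to follow the blueprint of \S\ref{S_21}: assume the forbidden local bound, invoke Lemma \ref{L_meyer2} to extract the vector-valued Meyer inequality, and then produce a Besicovitch configuration that contradicts it. As in Theorem \ref{th1}, the case $p < q$ is eliminated by homogeneity, and the remaining range $q \le 2 \le p$ on the ``upper triangle'' collapses---via duality (using self-adjointness of $S$) and the embedding $L^q(\SB) \hookrightarrow L^1(\SB)$---to the single claim that no local $(L^p, L^1)$-inequality can hold for $1 \le p < 2$. Under this hypothetical bound $M := \|S\|_{L^p(\SB)\to L^1(\SB)} < \infty$, Lemma \ref{L_meyer2} delivers \eqref{v22} for every finite family of primitive idempotents $\bc_j \in V$, which will serve as the working inequality.

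The crucial new ingredient compared with the light-cone case is the choice of a one-parameter family of primitive idempotents whose ``directions'' trace a circle inside $V$. Since $V$ is irreducible of rank $r \ge 2$, we fix orthogonal primitive idempotents $c_1, c_2 \in V$ and a non-zero $\hat v$ in the Peirce subspace $V_{12}$, rescaled so that $\hat v^2 = c_1 + c_2$. For $\theta \in [0, 2\pi)$, set
\[
\bc(\theta) \,:=\, \cos^2(\theta/2)\,c_1 + \sin^2(\theta/2)\,c_2 + \tfrac{1}{2}\sin\theta\cdot\hat v .
\]
The Peirce rules $c_i \hat v = \tfrac{1}{2}\hat v$ $(i=1,2)$, $c_1 c_2 = 0$ and $\hat v^2 = c_1 + c_2$ give $\bc(\theta)^2 = \bc(\theta)$, while $\mathrm{tr}_V(\hat v) = 0$ forces $\mathrm{tr}_V(\bc(\theta)) = 1$, so each $\bc(\theta)$ is primitive in the full algebra $V$. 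The three mutually orthogonal vectors $c_1+c_2$, $c_1-c_2$, $\hat v$ span a three-dimensional subspace $U \subset V$; in a rescaled orthonormal basis $\{e_1, e_2, e_3\}$ of $U$ one reads $\bc(\theta) = \tfrac{1}{\sqrt 2}(e_1 + \cos\theta\,e_2 + \sin\theta\,e_3)$, i.e.\ a genuine circle in the affine plane $\{\xi\in U : \lan\xi{e_1} = 1/\sqrt 2\}$, in exact analogy with the light-cone family $\{\tfrac{1}{2}(1, \bu_j)\}$ of \S\ref{S_21}.

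The last step is to transfer the Besicovitch construction to $U$. Decompose $V = U \oplus U^\perp$, fix a non-zero $\phi \in C^\infty_c(U^\perp)$, and feed \eqref{v22} with tensor functions $f_j := \bbone_{F_j} \otimes \phi$, where $F_j \subset U$ are the rotated Besicovitch boxes of \S\ref{S_21} built from a planar Besicovitch configuration in the $(e_2, e_3)$-plane, each with long-axis direction $-2\bc(\theta_j) \in U$ (the analog of $\tbn_j$). Because every $\bc(\theta_j)$ lies in $U$, the indicator $\bbone_{\Pi_j}$ depends only on the $U$-coordinates, so the half-space multipliers factorize as $H_j(f_j) = H^U_j(\bbone_{F_j}) \otimes \phi$, where $H^U_j$ is the corresponding half-space multiplier on $U$. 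Substituting this factorization into \eqref{v22} produces an inequality on $U \cong \SR^3$ formally identical to the one analyzed in \S\ref{S_21}; letting the Besicovitch parameter $\e \searrow 0$ then closes the argument. The essentially only genuinely new technical point is the primitivity of $\bc(\theta)$ in the \emph{full} algebra $V$ (rather than just in the rank-2 Jordan subalgebra generated by $c_1, c_2, \hat v$), and this is precisely what the trace identity above secures; I expect this---together with producing the right circular family of idempotents parametrizing a 2-dimensional set of directions---to be the main obstacle.
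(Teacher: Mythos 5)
Your proof is correct and follows essentially the same strategy as the paper: invoke the Meyer-type Lemma \ref{L_meyer2}, reduce to a $3$-dimensional light-cone sitting inside $\O$ via a tensor-product factorization of the half-space multipliers, and run the Besicovitch construction from \S\ref{S_21}. The one place where you genuinely improve the exposition is the explicit circle of idempotents $\bc(\theta)=\cos^2(\theta/2)c_1+\sin^2(\theta/2)c_2+\tfrac12\sin\theta\,\hat v$ together with the trace identity $\mathrm{tr}_V(\bc(\theta))=1$, which cleanly establishes primitivity in the \emph{full} algebra $V$; the paper instead passes through the rank-two subalgebra $\cV=V_{11}\oplus V_{12}\oplus V_{22}$ and asserts without detail that primitive idempotents of $\cV$ remain primitive in $V$, a true but not entirely obvious fact that your trace computation bypasses.
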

\begin{proof}
We shall reduce matters to the case of light-cones, so that we can use the same construction as in \S\ref{S_2}. Consider in $V=\SR^n$ a fixed Peirce decomposition, and split the space as the orthogonal direct sum
\Be\label{V1}
V=\cV\oplus\Big[\cV_{\frac12}\oplus\cW\Big],
\Ee
where 
\[ %\Be\label{V2}
\cV:=V_{11}\oplus V_{12}\oplus V_{22}\mand \cW:=\bigoplus_{3\leq i\leq j\leq r} V_{ij}
\]
are, respectively, the subalgebras of upper $2\times 2$ and lower $(r-2)\times (r-2)$ matrices, and where
\[%Be\label{V3}
\cV_{1/2}:=\bigoplus_{1\leq i\leq 2<j\leq r} V_{ij}.
\]
%Then, letting $\cV\equiv \SR^k$ and $\be_0=(0,\be')\in\cV_{1/2}\oplus\cW$,
%with $\be'$ the identity element of $\cW$, one obtains the equality in \eqref{tO} (using e.g. the characterization of cones by principal minors, see \cite[pp. 114 and 121]{FK}).
%$\cV$ and $\cW$ are subalgebras of rank 2 and $r-2$, respectively, see \eqref{V2}, and the space $\cV_{\frac12}$ is given as in \eqref{V3}. 
Note that, if $\be'$ denotes the identity element in $\cW$, then we have 
\Be\label{VO}
\O\cap \big[\cV+\be'\big]=\tO+\be',
\Ee
where $\tO$ is the cone (of rank 2) associated with $\cV$.
This last identity is easily proved using the characterization of cones by principal minors in \eqref{minor}. Finally, we select suitable  coordinates in $\cV\equiv \SR^k$ so that $\tO$ is represented as a light-cone, and therefore we can use the construction given in \S\ref{S_2}.

Namely, arguing as in \S\ref{S_d}, we write $\SR^k=\SR^3\times \SR^{k-3}$, and fix two (non-null) functions $\phi\in C^\infty_c(\SR^{k-3})$ and $\psi\in C^\infty_c(\SR^{n-k})$ supported in small balls.
Next, we select a collection of sets $F_j\subset\SR^3$ constructed as in \S\ref{S_21}, and define the functions
\[
f_j=\bbone_{F_j}\otimes\phi\otimes \psi.
\]
We define the vectors \[\bc_j:=(-\tbn_j, 0, 0)\in V
\]
where $\tbn_j=(-1,\bu_j, 0, \ldots,0)\in\SR^k$ are as in \eqref{nj}, that is generated by the longest side directions $\bu_j\in\SR^2$ of the Besicovitch rectangles $R_j$ conforming the sets $F_j$. 

The vectors $\bc_j$ 
are primitive idempotents in $V$ (since so are the vectors $-\tbn_j$ in $\cV$). So, if we define the half-spaces $\Pi_j$ in \eqref{Pij2}, and the corresponding multiplier operators $H_j$ in \eqref{Hj2}, we can make use of Lemma \ref{L_meyer2}. Finally, by orthogonality, it is easily seen that this construction gives
\[
H_j(f_j)=H_j(\bbone_{F_j}\otimes \phi\otimes\psi)=H^{(3)}_j(\bbone_{F_j})\otimes \phi\otimes\psi,
\]
where $H^{(3)}_j$ is the 3-dimensional multiplier associated with half-spaces 
\[\Pi_j^{(3)}=\{\xi\in\SR^3\mid \lan\xi{(-1,\bu_j)}<0\}.
\]
Thus,  Lemma \ref{L_meyer2} implies a 3-dimensional inequality
\[
\|\phi\otimes\psi\|_1\,\Big\|\big(\sum_j|H_j^{(3)}(\bbone_{F_j})|^2\big)^\frac12\Big\|_{L^1(\SB^{(3)})}\,\leq\, c_p\,M\,\|\phi\otimes\psi\|_p\,\Big\|\big(\sum_j|\bbone_{F_j}|^2\big)^\frac12\Big\|_{L^p(\SB^{(3)})},
\]
which we know it cannot hold from \S\ref{S_21}. This completes the proof of Theorem \ref{th1b}.
\end{proof}

%%%% end new section  %%%%%

\section{Boundedness of Cauchy-Szeg\"o projections}
\label{S_4}
\setcounter{equation}{0}\setcounter{footnote}{0}
\setcounter{figure}{0}

In this section we describe in more detail the question posed at the end of \S1, and derive Corollary \ref{cor1} from Theorem \ref{th1b} and the transfer principle proved in \cite{BB}.

We begin by recalling some basic terminology, referring to the survey paper \cite{vagi} for more detailed definitions and further literature.

\subsection{Cauchy-Szeg\"o projection in a bounded symmetric domain}\label{S_D}

Let $D$ be a %(irreducible) 
bounded symmetric domain in $\SC^n$ in a standard realization\footnote{In particular, as in \cite{Bo60}, we assume $0\in D$ and $D$ convex and circular, ie $e^{i\theta} z\in D$ for all $\theta\in\SR$, $z\in D$.}, and let $\Sigma$ denote its Shilov boundary. We write $d\sigma$ for the (normalized) measure in $\Sigma$ which is invariant under the subgroup of conformal automorphisms of $D$ that preserve the origin. The Hardy space $H^p(D)$, $1\leq p<\infty$, is the set of 
holomorphic functions $g:D\to \SC$ such that
\[
\|g\|_{H^p}:=\sup_{0<r<1}\Big[\int_\Sigma |g(rw)|^p\,d\sigma(w)\Big]^\frac1p<\infty.
\]
Every $g\in H^p(D)$ has a boundary value
\[
g_0(w):=\lim_{r\nearrow 1} g(rw), \quad w\in\Sigma,
\]
with convergence in $L^p(\Sigma)$ (and a.e.), and moreover $\|g\|_{H^p}=\|g_0\|_{L^p(\Sigma)}$; see  \cite{Bo60} or \cite[p. 278]{vagi}. In particular, $H^2(D)$ is a Hilbert space, and there is an orthogonal projection $S_D: L^2(\Sigma)\to H^2(D)$. 
%called the \emph{Cauchy-Szeg\"o projection} of $D$. 
Since point evaluations are continuous linear functionals in $H^2$, this operator can be expressed as
\[
S_Dg(z)=\int_\Sigma S_D(z,w)g(w)\,d\sigma(w), \quad z\in D,
\]
where $S_D(z,w)$ is called the \emph{Cauchy-Szeg\"o kernel} of $D$. Passing to boundary values, we consider the operator, defined at least for $g\in L^2(\Sigma)$, 
\Be\label{SSD}
\SS_Dg(\xi):=\lim_{r\nearrow 1}\int_\Sigma S_D(r\xi,w)g(w)\,d\sigma(w), \quad \xi\in \Sigma,
\Ee
which we shall call the \emph{Cauchy-Szeg\"o projection} of $D$. 

\

{\bf Question 1:} \emph{For what values $1\leq p, q \leq \infty$ there exists a continuous extension of $\SS_D:L^p(\Sigma)\to L^q(\Sigma)$? }

\

When $D$ is the unit ball of $\SC^n$, a classical theorem of Kor\'anyi and V\'agi establishes that $\SS_D$ maps $L^p\to L^p$ for all $1<p<\infty$, see \cite{KV71} or \cite[Ch 6]{rudin2}; hence the above question has a positive answer of all $1\leq q\leq p\leq \infty$, except at the endpoints $p=q=1$ and $p=q=\infty$.

However, for (irreducible) domains $D$ of higher rank $\geq2$ the situation is quite different. If $D$ is a domain of \emph{tube-type}, that is, conformally equivalent to a tube domain $\Tu=\SR^n+i\O$ over a symmetric cone $\O\subset\SR^n$, then B\'ekoll\'e and Bonami  showed in \cite[Thm 6]{BB} that $\SS_D$ maps $L^p\to L^p$ iff $p=2$, and left open the study of $(p,q)$ inequalities. Moreover, if $D$ is conformally equivalent to a Siegel domain of type II (ie., not of tube type), then the question is completely open, even regarding $L^p\to L^p$ boundedness (for $p\not=2$).

We remark that, although the kernels $S_D(z,w)$ have geometrically explicit formulas, see e.g. \cite{FK90}, these are not so easily manageable for 
%proving $L^p$-estimates for
handling the operator $\SS_D$. A common strategy is to transfer the problem, via conformal mappings, to an unbounded realization of $D$, which makes possible to use Fourier transform techniques. We present in the next subsection the needed terminology from the unbounded setting; see \cite[Chapter 3]{sw} or \cite[Ch IX.4]{FK}
for a more detailed exposition.

\subsection{Cauchy-Szeg\"o projection in tube domains}
Let 
\[
\Tu:=\SR^n+i\O\subset\SC^n
\]
denote the tube domain over a symmetric cone $\O$ in $\SR^n$. 
The Hardy space 
$H^p(\Tu)$, $1\leq p<\infty$,  is now defined as the set of holomorphic functions $F$ in $\Tu$ with
\[
\|F\|_{H^p(\Tu)}:=\sup_{y\in\O}\Big[\int_{\SR^n}|F(x+iy)|^p\,dx\Big]^\frac1p\,<\,\infty.
\]
Every $F\in H^p(\Tu)$ has a boundary value $F_0\in L^p(\SR^n)$
\[
F_0(x):=\lim_{y\to0} F(x+iy),\quad x\in\SR^n,
\]
where the limit exists in the $L^p(\SR^n)$-norm
(unrestrictedly in $y\in\O$), and also pointwise at a.e. $x\in\SR^n$ 
if $y$ is restricted to a proper closed subcone $\overline{\O}_1\subset \O\cup\{0\}$; see e.g. \cite[Chapter 3, Theorems 5.5\footnote{The pointwise restricted convergence at a.e. $x$ also holds \emph{non-tangentially}, as described in \cite[p. 119]{sw}. This fact is used later in the transference principle, see also \cite[\S5.2]{BB}.} and 5.6]{sw}. As before, it holds $\|F\|_{H^p}=\|F_0\|_{L^p(\SR^n)}$, and in particular, $H^2(\Tu)$ can be regarded as a closed subspace of $L^2(\SR^n)$. The orthogonal projection $S_{\Tu}:L^2(\SR^n)\to H^2(\Tu)$ can be written as
\[
S_{\Tu}f(z)=\int_{\SR^n} S_\Tu(z,u)f(u)\,du, \quad z\in \Tu,
\]
where the \emph{Cauchy-Szeg\"o kernel} has now the explicit expression
\Be\label{SSTu}
S_\Tu(z,u)=\int_\O e^{2\pi i \lan{z-u}{\xi}}\,d\xi, \quad z\in\Tu,\;u\in\SR^n;
\Ee
see \cite[Theorem IX.4.3]{FK}. Passing to boundary values, we define the \emph{Cauchy-Szeg\"o projection} in $\Tu$ as the operator 
\Be\label{Mp}
\SS_\Tu f(x):=\lim_{y\to 0}S_\Tu f(x+iy)
=\int_\O e^{2\pi i \lan{x}{\xi}}\,\hf(\xi)\,d\xi,
\Ee
at least for $f\in L^2(\SR^n)$. Thus, $\SS_\Tu$  coincides with the Fourier multiplier operator in $\SR^n$ given by the symbol $\bbone_\O$. 
In particular, if $M_p(\SR^n)$ denotes the class of bounded Fourier multipliers in $L^p(\SR^n)$, we have
\[
\SS_\Tu:L^p(\SR^n)\to L^p(\SR^n) \quad \iff \quad\bbone_\O\in M_p(\SR^n).
\]
%with $M_p(\SR^n)$ denoting the class of bounded Fourier multipliers in $L^p(\SR^n)$.  

When $n=1$, %we have $\O=(0,\infty)$, and
%the Cauchy projection is the Fourier multiplier with symbol 
it goes back to M. Riesz \cite{riesz} that %$\bbone_{(0,\infty)}\in M_p(\SR)$ 
the Cauchy projection
%$\SS_\Tu$ 
maps $L^p(\SR)\to L^p(\SR)$ iff $1<p<\infty$. 
When $n>1$ (and $\O$ is irreducible), the situation changes and only the trivial $L^2$ boundedness of $\SS_\Tu$ can hold. This fact was an open question until the appearance of Fefferman's example; see   \cite[Problem 3]{stein70}.
The precise result is stated below, quoted from \cite[Theorem 5]{BB}.

\begin{theorem}\label{th3}
Let $1\leq p\leq \infty$, and let $\O$ be an irreducible symmetric cone in $\SR^n$ with rank $r\geq2$. Then  $\SS_\Tu$ admits a bounded extension to $L^p(\SR^n)$ only if $p=2$.
\end{theorem}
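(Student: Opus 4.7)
The plan is to obtain Theorem \ref{th3} as an immediate consequence of Theorem \ref{th1b}, using the elementary observation that global $L^p(\SR^n)$-boundedness of a Fourier multiplier implies its local $(p,p)$-boundedness in the sense of \eqref{Lpq}.

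Concretely, recall from \eqref{Mp} that $\SS_\Tu$ coincides with the Fourier multiplier on $\SR^n$ with symbol $\bbone_\O$. Suppose, for contradiction, that for some $p \neq 2$ the operator $\SS_\Tu$ admits a bounded extension $L^p(\SR^n) \to L^p(\SR^n)$ with norm $M < \infty$. Fixing a ball $\SB = \SB_{r_0} \subset \SR^n$ and $f \in C^\infty_c(\SB)$, one has the trivial restriction bound
\[
\|\bbone_\SB\,\SS_\Tu(f\bbone_\SB)\|_{L^p(\SB)} \;\leq\; \|\SS_\Tu f\|_{L^p(\SR^n)} \;\leq\; M\,\|f\|_{L^p(\SR^n)} \;=\; M\,\|f\|_{L^p(\SB)}.
\]
Therefore $\bbone_\O$ is a local $(p,p)$-Fourier multiplier, i.e.\ \eqref{Lpq} holds with $q=p$.

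By Theorem \ref{th1b}, such local $(p,q)$-boundedness can occur only when $1 \leq q \leq 2 \leq p \leq \infty$. Setting $q=p$ in this range forces $p=2$, contradicting the choice $p \neq 2$.

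Since the substantive analytic work (the Besicovitch/Fefferman-type construction and the Jordan-algebraic reduction to a light-cone in a two-dimensional Peirce slice) is already encapsulated in Theorem \ref{th1b}, there is no real obstacle in deducing Theorem \ref{th3}; the result is simply the diagonal case $p=q$ of the stronger local statement. The only mild point is to verify that the a priori extension of $\SS_\Tu$ agrees with the Fourier multiplier on the dense subspace $C^\infty_c(\SR^n)$, which is clear from \eqref{Mp}.
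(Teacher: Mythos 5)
Your argument is correct and gives a clean deduction, but it takes a genuinely different route from the paper's proof. You obtain Theorem~\ref{th3} from Theorem~\ref{th1b} via the trivial implication that global $L^p$-boundedness of $\SS_\Tu$ implies local $(p,p)$-boundedness (cut off to a ball $\SB$ and drop the characteristic functions), after which Theorem~\ref{th1b} with $q=p$ forces $p=2$. There is no circularity, since the proof of Theorem~\ref{th1b} in \S\ref{S_3} is independent of Theorem~\ref{th3}. The paper, by contrast, proves Theorem~\ref{th3} directly: for $r=2$ the cone $\O$ is linearly equivalent to a light-cone, so failure for $p\neq 2$ follows from the classical Fefferman and de~Leeuw theorems; for $r>2$ it exhibits a rank-$2$ affine slice $\O\cap[\SR^k\times\{\be_0\}]=\tO\times\{\be_0\}$ via the Peirce decomposition and invokes de~Leeuw's restriction theorem to conclude $\bbone_\tO\in M_p(\SR^k)$, reducing to the rank-$2$ case. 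What the paper's route buys is a proof that does not depend on the new local $(p,q)$ machinery, staying within classical multiplier theory and making the statement self-contained (indeed, this theorem is quoted from \cite{BB}, which predates Theorem~\ref{th1b}). What your route buys is brevity: given Theorem~\ref{th1b}, no appeal to de~Leeuw's theorem is needed at all. One small imprecision in your write-up: Theorem~\ref{th3} is not literally the diagonal case of Theorem~\ref{th1b} (one concerns global, the other local boundedness); it \emph{follows from} the diagonal case together with the global-implies-local observation, which is in fact exactly what your displayed estimate establishes.
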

\begin{proof}
When $r=2$, the result is clear since $\O$ is equivalent to a light-cone.
We sketch the argument for $r>2$, which is not explicit in \cite{BB}. 

Choosing in $V=\SR^n$ the same coordinates as in \eqref{V1}, one obtains an
orthogonal decomposition $\SR^n=\SR^k\oplus \SR^{n-k}$ and a vector $\be_0=(0,\be')\in\SR^{n-k}$ so that
\Be\label{tO}
\O\cap \big[\SR^k\times\{\be_0\}\big]=\tO\times\{\be_0\},
\Ee
where $\tO$ is a cone of rank 2 in $\SR^k$. So, if $\bbone_\O$ belongs to $M_p(\SR^n)$ then, by de Leeuw's theorem, $\bbone_\tO$ belongs to $M_p(\SR^k)$,
which can only happen if $p=2$.
\end{proof}

We finally remark that, in the unbounded setting of $\Tu$, the (global) continuity of $\SS_\Tu:L^p(\SR^n)\to L^q(\SR^n)$ cannot occur when $p\not=q$. However, the question of local $(p,q)$-boundedness can be asked, and as shown in \cite{BB}, it turns out to be equivalent to the Question 1 above. This is the content of the next subsection.

\subsection{The transfer principle}
 Let $D$ be an irreducible bounded symmetric domain of tube type in $\SC^n$, that is, so that there is a conformal bijection
\Be\label{conf1}
\Phi:D\to \Tu,
\Ee
 where  $\O$ is an (irreducible) symmetric cone in $\SR^n$. When $D$ is given in its standard realization, the mapping $\Phi$ is explicit 
  \Be\label{cay}
\Phi(w)=i(\be+w)(\be-w)^{-1}
 \Ee
 and is usually called the \emph{Cayley transform}\footnote{In  \eqref{cay}, the identity element $\be$ and the product and inverse  
are related to the Jordan algebra structure of the underlying (real) space $V=\SR^n$; see \S\ref{S_3} above, or \cite{FK}. 
 %We shall however not need this explicit formula in the sequel.
 }; see e.g. \cite[\S X.2]{FK}.

 The Cauchy-Szeg\"o kernels in $D$ and $\Tu$ are related by the formula
 \Be\label{SDTu}
 S_D(w,w')=\,c_0\, S_{\Tu}(\Phi(w),\Phi(w'))\,J_{\Phi}(w)^\frac12\,\bar{J}_\Phi(w')^\frac12,
 \Ee
 where $J_\Phi$ is the complex jacobian of $\Phi$, and $c_0\in\SC$ is a constant.
This formula can be obtained from a similar identity relating the Bergman kernels of $D$ and $\Tu$, which appears when squaring both sides in \eqref{SDTu}; see \cite[Chapter XIII]{FK}. Also the change of variables
$z=\Phi(w)$, which preserves the Shilov boundaries, maps the measure $d\sigma(w)$ in $\Sigma$ into a measure in $\SR^n$ explicitly given by
\Be
\frac{dx}{|J_\Phi(\Phi^{-1}(x))|}\,=\, \frac{c_1\,dx}{\Dt(e+x^2)^\frac nr};
\label{CV}
\Ee
see \cite[Prop X.2.4]{FK}. We shall not need the explicit formulas, only the property that, for each compact set $K\Subset\Dom\Phi$ it holds
\Be\label{cK}
c_K\leq |J_\Phi(w)|\leq C_K, \quad w\in K,
\Ee
for some constants $C_K\geq c_K>0$. Here, $\Dom\Phi=\{w\in\SC^n\mid\Dt(\be-w)\not=0\}$, and we shall consider   
\[
\Sigma_0:=\Sigma \cap \Dom\Phi=\Phi^{-1}(\SR^n),
\] 
which is an open dense set with full measure in $\Sigma$; see more details in \cite[Prop X.2.3]{FK}.

Below, in analogy to \eqref{SB}, if $\cT$ is an operator and $K\subset \SC^n$, we use the notation $\cT^K$ for the local operator
\[
\cT^Kf=\bbone_{K}\cT\big(f\bbone_{K}\big).
\]
The transference principle of B\'ekoll\'e and Bonami can now be stated as follows, in a slightly more general setting than \cite{BB}. 

\begin{theorem}
\label{th_transf}
Let $D$ and $\Tu$ be as above. %be an irreducible bounded symmetric domain in $\SC^n$, conformally equivalent to a tube domain $\Tu$ as above. 
Then, for every $1\leq p, q\leq \infty$, the following assertions are equivalent
\Benu
\item $\SS_D:L^p(\Sigma)\to L^q(\Sigma)$ is continuous 
\item $\SS^K_D:L^p(\Sigma)\to L^q(\Sigma)$ is continuous, for all compact sets $K\Subset \Sigma_0$ 
\item $\SS^\tK_\Tu:L^p(\SR^n)\to L^q(\SR^n)$ is continous for all compact sets $\tK\subset \SR^n$.
\Eenu
\end{theorem}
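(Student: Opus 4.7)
The plan is to establish the cycle $(1)\Rightarrow(2)\Rightarrow(3)\Rightarrow(2)\Rightarrow(1)$, where the first three arrows are relatively routine and the last one is the essential transference step.

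\emph{Step 1: $(1)\Rightarrow(2)$.} This is immediate: from $\cS_D^K f=\bbone_K\cS_D(f\bbone_K)$, the bound $\|\cS_D\|_{L^p\to L^q}$ automatically controls $\|\cS_D^K\|_{L^p\to L^q}$ since multiplication by $\bbone_K$ is an $L^r$-contraction for every $r$.

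\emph{Step 2: $(2)\Leftrightarrow(3)$.} We build an intertwining operator from formulas \eqref{SDTu} and \eqref{CV}. Setting $\tJ_\Phi(u):=J_\Phi(\Phi^{-1}(u))$ and, for $g$ defined on $\Sigma_0$,
\[
f(u):=\frac{g(\Phi^{-1}(u))}{\tJ_\Phi(u)^{1/2}},\qquad u\in\SR^n,
\]
a direct computation using \eqref{SDTu}, the change of variables $u=\Phi(w)$ in the integral defining $\cS_D g$, and the identity $J_\Phi(w)^{1/2}\,\bar J_\Phi(w)^{1/2}=|J_\Phi(w)|$, yields the intertwining relation
\[
\cS_D g(\xi)\;=\;c_0\,J_\Phi(\xi)^{1/2}\,(\cS_{\Tu} f)(\Phi(\xi)),\qquad \xi\in\Sigma_0.
\]
(The fact that boundary values match on both sides follows from the non-tangential convergence of $\cS_{\Tu} f$ noted in the footnote to \cite[Chapter 3]{sw}.) When $g$ is supported on a compact $K\Subset\Sigma_0$, then $f$ is supported on $\tK=\Phi(K)$, and \eqref{cK} together with \eqref{CV} shows that $\|g\|_{L^p(\Sigma,d\sigma)}\asymp \|f\|_{L^p(\tK,dx)}$ and $\|\cS_D g\|_{L^q(K,d\sigma)}\asymp \|\cS_{\Tu} f\|_{L^q(\tK,dx)}$, with comparability constants that depend on $K$ but are finite. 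Reversing the substitution (setting $K:=\Phi^{-1}(\tK)$, which is a compact subset of $\Sigma_0$) yields the converse direction, and thus $(2)\Leftrightarrow(3)$.

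\emph{Step 3: $(2)\Rightarrow(1)$ (the main obstacle).} Here a function $g\in L^p(\Sigma)$ need not be supported on a compact subset of $\Sigma_0$, so one cannot directly invoke $(2)$. The strategy is to push the intertwining of Step 2 to a weighted global identity. Indeed, for arbitrary $g\in L^p(\Sigma)$ (and its associated $f$), the computation in Step 2 combined with \eqref{CV} gives
\[
\|\cS_D g\|_{L^q(\Sigma)}^q=|c_0|^q\int_{\SR^n}|\tJ_\Phi(u)|^{\frac{q}{2}-1}\,|\cS_{\Tu} f(u)|^q\,du,\qquad \|g\|_{L^p(\Sigma)}^p=\int_{\SR^n}|\tJ_\Phi(u)|^{\frac{p}{2}-1}|f(u)|^p\,du,
\]
so $(1)$ is equivalent to the weighted estimate $\||\tJ_\Phi|^{1/2-1/q}\cS_{\Tu} f\|_{L^q(\SR^n)}\leq C\||\tJ_\Phi|^{1/2-1/p} f\|_{L^p(\SR^n)}$. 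Since $|\tJ_\Phi(u)|\asymp\Dt(e+u^2)^{-n/r}$ has polynomial decay at infinity, one can combine a dyadic decomposition of $\SR^n$ (e.g.\ into annuli on which $|\tJ_\Phi|$ is essentially constant) with the local bounds from $(3)$ on each piece, and sum the off-diagonal contributions using the decay of the Szeg\"o kernel to complete the transfer. The hardest part is handling the interaction between distinct dyadic pieces, for which we follow the argument of B\'ekoll\'e and Bonami \cite[\S5]{BB}; see also the non-tangential convergence results used there to justify taking boundary values. This closes the cycle and completes the proof.
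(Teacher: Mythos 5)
Your Steps~1 and~2 match the paper's approach: $(1)\Rightarrow(2)$ is trivial, and $(2)\Leftrightarrow(3)$ is exactly the intertwining relation
$\SS_D g(\xi)=c_0\,J_\Phi(\xi)^{1/2}\,\SS_{\Tu}f(\Phi(\xi))$
coming from the Cayley transform, formulas \eqref{SDTu}, \eqref{CV} and the non-tangential boundary limits. The paper phrases $(2)\Rightarrow(3)$ by dualizing (testing against $g\in C_c^\infty(\tK)$ with $\|g\|_{q'}=1$), which is a technical device to avoid having to make sense of $\SS_\Tu$ acting on general $L^p$ functions; your version is morally the same but should be accompanied by the same precaution.

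The genuine gap is in Step~3. First, the ``weighted global identity'' $\|\SS_D g\|_{L^q(\Sigma)}^q=|c_0|^q\int|\tJ_\Phi|^{q/2-1}|\SS_\Tu f|^q\,du$ presupposes that $\SS_D g\in L^q(\Sigma)$, which is precisely the conclusion of $(1)$; for arbitrary $g\in L^p(\Sigma)$ this requires a density/duality argument before the identity can even be written. Second, and more seriously, the proposed dyadic decomposition does not close. Write $A_k$ for the dyadic annulus $|u|\asymp R_k=2^k$; on it $|\tJ_\Phi(u)|\asymp R_k^{-2n}$. The scaling argument at the start of \S 1 shows that the best local constant from assertion $(3)$ on a set of diameter $R_k$ is of order $R_k^{n(1/q-1/p)}$, which \emph{grows} with $k$ when $p>q$. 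Plugging this into the diagonal piece of the weighted estimate, one finds that the constant must be dominated by
$R_k^{-2n(1/2-1/p)}/R_k^{-2n(1/2-1/q)}=R_k^{-2n(1/q-1/p)}$,
which \emph{decays}; so the diagonal terms cannot be summed. Thus the decomposition-plus-local-bounds strategy, taken literally, fails in the whole range $p>q$. (In fact, in the trivial range $q\le 2\le p$ the weighted estimate follows directly from the $L^2$-boundedness of $\SS_\Tu$ and two applications of H\"older against the integrable weight $|\tJ_\Phi|^{-1}$, without any decomposition.) For $(2)\Rightarrow(1)$ the paper simply refers to \cite[\S 3]{BB} (and \cite[Prop.\ 2.8]{BGN14}), where the argument exploits compactness and the group action on $\Sigma$ rather than a dyadic decomposition of $\SR^n$; your citation should be corrected to \S 3, and the intermediate sketch should either be dropped or replaced by the compactness/kernel-decay argument used there.
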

\begin{proof}

For completeness, we give some details of  (2) $\Rightarrow$ (3), which is the only case relevant for purposes
 (the implication (1) $\Rightarrow$ (2) being trivial)\footnote{The argument for (2)$\Rightarrow$(1) can be seen
 in \cite[\S3]{BB}; see also \cite[Prop 2.8]{BGN14} for a slightly more detailed presentation. The implication (3)$\Rightarrow$(2), 
which is not needed here, has an entirely similar proof to (2)$\Rightarrow$(3).}.  

Assuming (2), we fix two compact sets  $K\subset\Sigma_0$ and $\tK\subset\SR^n$ such that $\tK=\Phi(K)$. We must show that, for some constant $C>0$ (possibly depending on $K$), it holds
\Be\label{IK}
I:=\int_\tK\big|\SS_\Tu f(x)\big|\,\big|g(x)\big|\,dx\leq C,
\Ee
for every $f$, $g\in C^\infty_c(\tK)$ with $\|f\|_p=\|g\|_{q'}=1$. 
Changing variables $x=\Phi(w)$ in \eqref{IK}, and using \eqref{CV} and \eqref{cK} we have
\Be\label{aux1}
I \leq C_K\,\int_K\big|\SS_\Tu f\big(\Phi(w)\big)\big|\,\big|g\big(\Phi(w)\big)\big|\,d\sigma(w).
\Ee
Now, for a.e. $w_0=\Phi^{-1}(x_0)$ in $\Sigma_0$ we claim that
\Be\label{SSlim}
\SS_\Tu f\big(\Phi(w_0)\big)=\lim_{r\to 1^-} S_\Tu f\big(\Phi(rw_0)\big).
\Ee
Indeed, the existence of the limit follows  from the fact that the radial segment $r\mapsto rw_0$ is transformed into the curve $r\mapsto \Phi(rw_0)$,
which for $r\nearrow 1$ lies in the region of restricted non-tangential convergence of the point $x_0$, due to the angle preserving property of the conformal map $\Phi$. 
%since the curve $r\mapsto \Phi(rw_0)$ lies in the restricted non-tangential region of $x_0$. 

If we now fix $w:=rw_0$ (for a given $r<1$), we can write
\Beas 
S_\Tu f\big(\Phi(w)\big) & = & \int_{\SR^n}
S_\Tu\big(\Phi(w), x'\big)\,f(x')\,dx'\\
\mbox{{\tiny ($x'=\Phi(w')$})} & = & 
\int_{\Sigma_0} S_\Tu\big(\Phi(w), \Phi(w')\big)\,f(\Phi(w'))\,|J_\Phi(w')|\,d\sigma(w')\\
& = & J_\Phi(w)^{-1/2}\,\int_{\Sigma_0} S_D(w, w')\,F(w')\,d\sigma(w'),
\Eeas
where we have used \eqref{SDTu} and  have denoted 
\[
F(w'):= c_0^{-1}\,\bar{J}_\Phi(w')^{-1/2}\,f(\Phi(w'))\,|J_\Phi(w')|.
\]
So, combining the previous equalities with \eqref{SSlim} we obtain
\[
\SS_\Tu f\big(\Phi(w_0)\big)= 
J_\Phi(w_0)^{-1/2}\,\SS_D F(w_0), \quad \sae w_0\in\Sigma_0.
\]
Continuing with \eqref{aux1}, we see that
\Beas 
I & \leq & C_K\,c_K^{-1/2}\,\int_K \big|\SS_D F\big|\,\big|g\circ\Phi\big|\,d\sigma\\
& \leq & C_K\,c_K^{-1/2}\,\big\|\SS_D F\big\|_{L^q(K)}\,
\|g\circ \Phi\|_{L^{q'}(\Sigma_0)}\\
& \leq & C'_K\,\|F\|_{L^p(\Sigma_0)}\,
\|g\circ \Phi\|_{L^{q'}(\Sigma_0)},
\Eeas
the last inequality due to the assumption (2). Finally, undoing the change of variables, and using once again \eqref{CV} and \eqref{cK}, we deduce that
\[
\|F\|_{L^p(\Sigma_0)}\,\leq\, c'_K\,\|f\|_{L^p(\SR^n)}=c'_K,
\]
and likewise $\|g\circ \Phi\|_{L^{q'}(\Sigma_0)}\leq c''_K$. This establishes \eqref{IK}.
\end{proof}

\subsection{Proof of Corollary \ref{cor1}}
This is now a direct consequence of \eqref{Mp}, Theorem \ref{th1b}, and the implications
(1) $\Rightarrow$  (2)  $\Rightarrow$  (3) in Theorem \ref{th_transf}.
\ProofEnd

\subsection{An application: duality of $H^p$ spaces}
\label{S_dual}

Let $D$ be a bounded symmetric domain in $\SC^n$, and let $H^p$, $1\leq p<\infty$, be the associated Hardy space as in \S\ref{S_D}.
It is well-known that, with the usual duality pairing, the (isomorphic) identity 
\[
(H^p)^*=H^{p'}
\]
holds under the assumption that $\SS_D:L^p\to L^p$ is bounded (where as usual, $\frac1p+\frac1{p'}=1$). 
In particular, if $D$ is the unit ball of $\SC^n$, the identity holds for all $1<p<\infty$. 
However, in (irreducible) bounded symmetric domains of higher rank, where the boundedness of $\SS_D$ fails (or is unknown), 
the structure of the dual space $(H^p)^*$ is still quite open; see e.g. the question posed in \cite[\S5.1]{zhu3}.

An abstract argument, based in the Hahn-Banach theorem, gives the following known identification.

\begin{lemma}\label{L_dual}
Let $1\leq p\leq 2$. Then, with the usual duality pairing, 
the following isometrical identity holds
\Be
\label{Hp_dual}
(H^p)^*=S_D(L^{p'}),
\Ee
where the right hand side is the space of all $G=S_D(g)$, for some $g\in L^{p'}(\Sigma)$, endowed with the norm
\[
\|G\|_{S_D(L^{p'})}:=\inf_{g\in L^{p'}\colon S_D(g)=G}\|g\|_{L^{p'}}.
\]
\end{lemma}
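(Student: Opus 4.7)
The plan is to combine Hahn--Banach duality on the closed subspace $H^p\subset L^p(\Sigma)$ with the self-adjointness of $S_D$ as the $L^2(\Sigma)$-orthogonal projection onto $H^2$. Since boundary values embed $H^p$ isometrically as a closed subspace of $L^p(\Sigma)$, the general principle $E^*=X^*/E^\perp$ (for a closed subspace $E$ of a Banach space $X$) gives the isometric identification
\[
(H^p)^* \;\cong\; L^{p'}(\Sigma)\,/\,(H^p)^\perp,
\]
under the standard pairing $\langle f,g\rangle=\int_\Sigma f\,\overline{g}\,d\sigma$, with the quotient norm. The task thus reduces to showing that the annihilator $(H^p)^\perp$ coincides with $\ker S_D|_{L^{p'}}$, after which the quotient becomes canonically $S_D(L^{p'})$.

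For the key identification, the hypothesis $1\le p\le 2$ gives $p'\ge 2$, so $L^{p'}(\Sigma)\subset L^2(\Sigma)$ (as $\sigma$ is a probability measure), and $S_D$ is well defined on $L^{p'}$ with image in $H^2$; note that $S_D g$ need not lie back in $L^{p'}$---this is precisely the failure shown in Corollary \ref{cor1}---but it is immaterial here. Using $S_D^*=S_D$ on $L^2$ together with $S_D f=f$ for $f\in H^2$, for $g\in L^{p'}$ and $f\in H^2$ one has
\[
\int_\Sigma f\,\overline{g}\,d\sigma \;=\; \langle S_D f,g\rangle_{L^2} \;=\; \langle f,S_D g\rangle_{L^2},
\]
so $S_D g=0$ iff $g\perp H^2$ in $L^2$. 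To upgrade the test class from $H^2$ to $H^p$ I would invoke the density of $H^2$ in $H^p$, which in bounded symmetric domains follows from the standard observation that the dilates $f_r(z):=f(rz)$ lie in $H^\infty\subset H^2$ and converge to $f$ in $H^p$-norm as $r\nearrow 1$. This yields $(H^p)^\perp=\ker S_D|_{L^{p'}}$.

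The induced map $[g]\mapsto S_D g$ is then a linear bijection from $L^{p'}/\ker S_D|_{L^{p'}}$ onto $S_D(L^{p'})\subset H^2\subset H^p$, under which the quotient norm transports exactly to $\|G\|_{S_D(L^{p'})}=\inf\{\|g\|_{p'}:S_Dg=G\}$, yielding \eqref{Hp_dual} isometrically. The conceptual subtlety---rather than a true technical obstacle---is that $S_D(L^{p'})$ must be read as a quotient of $L^{p'}$ rather than as a subspace of it, so the $L^{p'}$-unboundedness of $S_D$ established in Corollary \ref{cor1} is compatible with, and not in conflict with, the duality statement. The only nontrivial input is density of $H^2$ in $H^p$, which is standard.
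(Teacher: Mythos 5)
Your proof is correct, and at its core it is the same argument as the paper's: both hinge on Hahn--Banach, which is precisely what underlies the isometric identification $(H^p)^*\cong L^{p'}(\Sigma)/(H^p)^\perp$ you invoke. What you do differently is make the identification $(H^p)^\perp=\ker\bigl(S_D|_{L^{p'}}\bigr)$ explicit, so that the quotient is read off canonically as $S_D(L^{p'})$ with the infimum norm; the paper instead defines the pairing $\Phi_G(F)=\int_\Sigma f_0\,\bar g\,d\sigma$ directly, proves injectivity by testing against the reproducing kernels $K_z=S_D(\cdot,z)$, and runs Hahn--Banach by hand for surjectivity. Your quotient picture is slightly cleaner in that injectivity comes for free once the kernel is identified (if $S_Dg=S_Dg'$ then $g-g'\in\ker S_D=(H^p)^\perp$, so the induced functionals agree), whereas the paper's route has the minor advantage of not needing the density step in one direction of the kernel computation. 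Both arguments do, in the end, rely on the density of $H^\infty$ (hence $H^2$) in $H^p$ via dilates: you use it to upgrade the orthogonality test class from $H^2$ to $H^p$, and the paper uses it implicitly when extending $\Phi_G$ from $H^p\cap H^2$ to $H^p$. This density is justified by the convergence $f(r\cdot)\to f$ in $H^p$-norm recorded in \S\ref{S_D}, so your appeal to it is sound.
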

\begin{proof} We sketch the standard proof for completeness. 
First note that we may regard $S_D(L^{p'})$ as a subspace of $H^2$, since $1\leq p\leq 2$. 
%First note that, since $H^2\subset H^p$ for $1\leq p\leq 2$, we have $(H^p)^*\subset H^2$, and hence we may regard $(H^p)^*$
%as a space of holomorphic functions in $D$, whose boundary values exist and belong at least to $L^2(\Sigma)$. 
Next, every $G=S_D(g)$, with $g\in L^{p'}(\Sigma)$, defines an element $\Phi_G$ in $(H^p)^*$ by the standard duality pairing,
namely
\[
\Phi_G(F):=\lan FG=\lan{f_0}{\SS_Dg}=\int_\Sigma f_0\,\bar{g}\,d\sigma,\quad F\in H^p\cap H^2,
\] 
where $f_0$ is the boundary limit of $F$. Moreover, 
\[
\big|\Phi_G(F)\big|\leq \|f_0\|_{L^p}\|g\|_{L^{p'}}=\|F\|_{H^p}\|g\|_{L^{p'}},
\]
so taking the infimum over all $g$ with $S_D(g)=G$ one obtains 
\[
\|\Phi_G\|_{(H^p)^*}\leq \|G\|_{S_D(L^{p'})}.
\] 
The correspondence $G\mapsto \Phi_G$ is injective, since testing with $K_z:=S_D(\cdot, z)\in H^2\cap H^p$, for each fixed $z\in D$, gives
\[
\Phi_G(K_z)=\overline{\int_\Sigma S_D(z,w)g(w)\,d\sigma(w)}=\overline{G(z)};
\]
so, if $\Phi_G=0$ then $G=0$. 

Finally, to see surjectivity, let $\Phi\in (H^p)^*$. Passing to boundary values we may regard $H^p(D)$ as a closed subspace of $L^p(\Sigma)$.
So, by the Hahn-Banach theorem $\Phi$ has a continuous extension $\tPhi$ to $L^p(\Sigma)$ with 
\[
\|\tPhi\|_{(L^p)^*}\leq \|\Phi\|_{(H^p)^*}.
\]
Thus, we can find $g\in L^{p'}(\Sigma)$ with $\|g\|_{L^{p'}}=\|\tPhi\|_{(L^p)^*}$ and 
\[
\tPhi(f)=\int_\Sigma f\,\bar{g}\,d\sigma, \quad f\in L^p.
\]
Then, letting $G:=S_D(g)$, and using the above notation one has
\[
\Phi_G(F)=\lan FG =\int_\Sigma f_0\,\bar{g}\,d\sigma=  \tPhi(f_0)=\Phi(F),\quad F\in H^p\cap H^2, 
\]
which implies that $\Phi_G=\Phi$. Moreover, 
\[
\|G\|_{S_D(L^{p'})}\leq \|g\|_{L^{p'}}=\|\tPhi\|_{(L^p)^*}\leq \|\Phi\|_{(H^p)^*}.
\]
This shows that the pairing $G\mapsto \Phi_G$ is an (anti)-linear isometric isomorphism from $S_D(L^{p'})$ into $(H^p)^*$.
\end{proof}

As a consequence of Lemma \ref{L_dual} and Corollary \ref{cor1} we obtain the following.

\begin{corollary}\label{th_dual}
Let $D\subset \SC^n$ be an irreducible bounded symmetric domain of tube type with rank $\geq2$. If $1\leq p\leq 2$ then (for the canonical inclusion) we have 
\Be\label{Hd}
(H^p)^* \not\hookrightarrow H^q, \quad \forall q>2.
\Ee
\end{corollary}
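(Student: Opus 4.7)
The strategy is to transfer the failure of $L^a\to L^b$ continuity of the Cauchy--Szeg\"o projection (Corollary \ref{cor1}) into a failure of the duality embedding, via the quotient-norm identification $(H^p)^*\cong S_D(L^{p'})$ supplied by Lemma \ref{L_dual}.

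Concretely, I would argue by contradiction: suppose $(H^p)^*\hookrightarrow H^q$ continuously. Under the identification of Lemma \ref{L_dual}, each $g\in L^{p'}(\Sigma)$ gives rise to $G:=S_D(g)\in S_D(L^{p'})=(H^p)^*$, and the assumed inclusion forces
\[
\|G\|_{H^q}\,\leq\,C\,\|G\|_{S_D(L^{p'})}\,\leq\,C\,\|g\|_{L^{p'}},
\]
the second inequality being immediate from the infimum defining the quotient norm. Passing to boundary values, and using the isometry $\|G\|_{H^q}=\|\SS_Dg\|_{L^q(\Sigma)}$ recalled in \S\ref{S_D}, this rewrites as
\[
\|\SS_D g\|_{L^q(\Sigma)}\,\leq\,C\,\|g\|_{L^{p'}(\Sigma)},\quad g\in L^{p'}(\Sigma),
\]
so $\SS_D$ extends continuously from $L^{p'}$ to $L^q$.

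Since $p\in[1,2]$ we have $p'\in[2,\infty]$, and by hypothesis $q>2$. But Corollary \ref{cor1} allows $\SS_D:L^a\to L^b$ only in the range $1\leq b\leq 2\leq a\leq\infty$; the pair $(a,b)=(p',q)$ satisfies $a\geq 2$ but violates $b\leq 2$. This contradiction yields \eqref{Hd}.

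I do not anticipate a substantial obstacle: the whole argument is a straightforward duality transfer once Lemma \ref{L_dual} and Corollary \ref{cor1} are in hand. The only point that merits a brief comment is that the symbol ``$\not\hookrightarrow$'' should rule out even a set-theoretic inclusion; this follows automatically from the closed graph theorem, since both $(H^p)^*=S_D(L^{p'})$ and $H^q$ embed continuously into $H^2$ (using $L^{p'},L^q\hookrightarrow L^2$ on the finite-measure space $\Sigma$, available because $p',q\geq 2$), so any set-theoretic inclusion is automatically continuous and the argument above still applies.
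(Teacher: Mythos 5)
Your proof is correct and follows essentially the same route as the paper: combine the quotient-norm description $(H^p)^*=S_D(L^{p'})$ from Lemma \ref{L_dual} with the failure of $\SS_D:L^{p'}\to L^q$ boundedness from Corollary \ref{cor1} to reach a contradiction. Your additional closed-graph observation, showing that even a purely set-theoretic inclusion is excluded, is a correct and useful clarification that the paper leaves implicit.
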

\begin{proof}

%Before proving the result we make some comments. Since $1\leq p\leq 2$, we have $H^2\subset H^p$ and hence
%$(H^p)^*\subset H^2$, so we may regard $(H^p)^*$ as a space of 
%holomorphic functions in $D$, whose boundary values exist and belong at least to $L^2(\Sigma)$. 
%Moreover, an abstract argument using the Hahn-Banach theorem gives the (isometrical) identity
%\Be
%\label{Hp_dual}
%(H^p)^*=S_D(L^{p'}),
%\Ee
%where the right hand side is the space of all $G=S_D(g)$, for some $g\in L^{p'}(\Sigma)$, endowed with the norm
%\[
%\|G\|_{S_D(L^{p'})}:=\inf_{g\in L^{p'}\colon S_D(g)=G}\|g\|_{L^{p'}}.
%\]
In view of Lemma \ref{L_dual}, we may regard $(H^p)^*$ as a space of 
holomorphic functions in $D$, whose boundary values exist and belong at least to $L^2(\Sigma)$.
Now, if the inclusion in \eqref{Hd} was true this would imply, for some constant $C>0$, that 
\[
\|\SS_D g\|_{L^{q}}=\|S_Dg\|_{H^{q}}\leq\, C\,\|S_Dg\|_{(H^p)^*}\leq \,C\,\|g\|_{L^{p'}}\,,
\]
using in the last step Lemma \ref{L_dual}. %at least for $g\in L^2$. 
Thus $\SS_D$ maps $L^{p'}\to L^q$ with norm bounded by $C$, which contradicts Corollary \ref{cor1} if $q>2$.
\end{proof}
\BR
For $p=1$, if one attempts to define $BMOA:=(H^1)^*$, in analogy to one of the various characterizations in the unit ball setting \cite[Thm 5.1]{zhu2}, then the boundary values of functions in $BMOA$
will in general not belong to $L^q(\Sigma)$, for any $q>2$. 
In particular, no embeddings of John-Nirenberg type will hold with such definition (compare with \cite[Ex 5.19]{zhu2} for the unit ball of $\SC^n$). A similar result for the dual of the Hardy space in the infinite torus $(H^1(\ST^\infty))^*$, has recently been obtained by Konyagin et al., see \cite[Corollary 2.3]{KQSS22}.
\ER

\

\emph{Acknowledgements}: The authors thank an anonymous referee for the careful reading and useful comments.

%%%%%% BIBLIOGRAPHY %%%%%%%
\bibliographystyle{plain}

\end{document}